\DeclareMathOperator*{\Ran}{Ran}
\DeclareMathOperator*{\Ker}{Ker}
\DeclareMathOperator*{\R}{Re}
\DeclareMathOperator*{\Sp}{Sp}
\DeclareMathOperator*{\Spu}{Sp_\mathrm{u}}
\DeclareMathOperator*{\Spuw}{Sp_u^\textit{w}}
\newcommand{\ii}{\mathrm{i}}
\newcommand{\dd}{\mathrm{d}}
\newtheorem{thm}{Theorem}[section]
\theoremstyle{definition}
\newtheorem{rem}[thm]{Remark}
\newtheoremstyle{mystyle}{}{}{\sl}{}{\bf}{.}{ }{}
\theoremstyle{mystyle}
\newtheorem{mythm}[thm]{Theorem}
\newtheorem{mylem}[thm]{Lemma}
\newtheorem{myprp}[thm]{Proposition}
\newtheorem{mycor}[thm]{Corollary}
\numberwithin{equation}{section}  
\begin{document}

\title[Improvements of the Katznelson-Tzafriri theorem]{Some improvements of the Katznelson-\\Tzafriri theorem on Hilbert space}
\subjclass[2010]{Primary: 47D03; secondary: 43A45, 43A46, 47A35.}
\author{David Seifert}
\address{Mathematical Institute, 24--29 St Giles', Oxford\;\;OX1 3LB, United Kingdom}
\curraddr{Balliol College, Oxford\;\;OX1 3BJ, United Kingdom}
\email{david.seifert@balliol.ox.ac.uk}
\date{27 March 2013}

\begin{abstract}
 This paper extends two recent improvements in the Hilbert space setting of the well-known Katznelson-Tzafriri theorem  by establishing both a version of the result valid for bounded representations of a  large class of abelian semigroups and a quantified version for contractive representations. The paper concludes with an outline of an improved version of the Katznelson-Tzafriri theorem for individual orbits, whose validity extends even to certain unbounded representations.
\end{abstract}

\maketitle

\section{Introduction}

In \cite{KT}, Katznelson and Tzafriri proved that, given a power-bounded operator $T$ on a complex Banach space $X$, $\|T^n(I-T)\|\rightarrow 0$ as $n\rightarrow\infty$ if (and only if) $\sigma(T)\cap\mathbb{T}\subset\{1\}$, where $\sigma(T)$ denotes the spectrum of $T$ and $\mathbb{T}$ is the unit circle. Given a sequence $a\in\ell^1(\mathbb{Z}_+)$, define $\widehat{a}(\lambda):=\sum_{n=0}^\infty a(n)\lambda^n$ ($|\lambda|\leq1$) and $\widehat{a}(T):=\sum_{n=0}^\infty a(n)T^n$, which is a bounded linear operator on $X$. Katznelson and Tzafriri also showed, in the same paper, that \begin{equation}\label{discrete_KT}\lim_{n\rightarrow\infty}\|T^n\widehat{a}(T)\|=0\end{equation} provided there exists a sequence $(a_n)$ in $\ell^1(\mathbb{Z}_+)$ such that each $\widehat{a_n}$ vanishes on an open neighbourhood of $\sigma(T)\cap\mathbb{T}$ and $\|a-a_n\|_1\rightarrow0$ as $n\rightarrow\infty$. This result has itself subsequently  been extended, first to the case of $C_0$-semigroups (see \cite[Th\'eor\`eme~III.4]{ESZ} and \cite[Theorem~3.2]{Vu}) and later to more general semigroup representations (see \cite[Theorem~4.3]{BV} and \cite{Vu}).

These results are optimal in various senses (see \cite[Section~5]{CT}), but improvements are possible when $X$ is assumed to be a Hilbert space. It is shown in  \cite[Corollary~2.12]{ESZ2}, for instance, that the weaker (and necessary) condition that $\widehat{a}$ vanish on $\sigma(T)\cap\mathbb{T}$ is sufficient for \eqref{discrete_KT} to hold, at least when $T$ is a contraction; see \cite[Proposition~1.6]{KvN} for a slightly more general result. This result has in turn been improved upon in two recent papers. In \cite{Leka}, L\'eka has extended this result to power-bounded operators on Hilbert space, and Zarrabi in \cite{Zarr} has shown that for contractions, and likewise for pairs of commuting contractions and for contractive $C_0$-semigroups, the limit appearing in \eqref{discrete_KT} is given, more generally, by  $\sup\{ |\widehat{a}(\lambda)|:\lambda\in\sigma(T)\cap\mathbb{T}\}$ or the appropriate analogue; for related results see \cite{AR},  \cite{BBG}, \cite{Berc2}, \cite{Berc} and \cite{Mus}.

The purpose of this paper is to improve both L\'eka's and Zarrabi's versions of the Katznelson-Tzafriri theorem by extending them to representations of a significantly larger class of semigroups. The main result, Theorem~\ref{Thm},  is a  Katznelson-Tzafriri type theorem which holds for bounded (as opposed to contractive) representations and thus includes  both \cite[Theorem~2.1]{Leka} and various results contained as special cases in \cite{Zarr}. The main implication of  Theorem~\ref{Thm} is proved first  via a certain ergodic condition as in \cite{Leka} and then by a more direct argument. The second method does not involve the ergodic condition and, as is shown in Theorem~\ref{ub}, leads naturally to an extension of Zarrabi's quantified results for contractive representations. Section~5, finally, contains a brief exposition of an improved version of the Katznelson-Tzafriri theorem for individual orbits. First of all, though, Section~2 sets out the necessary preliminary material.

\section{Preliminaries}

The setting throughout, even when not stated explicitly, will be that of an abelian semigroup $S$ contained in a locally compact abelian group $G$ satisfying $G=S-S$. The Haar measure on $G$ is denoted by $\mu$, and it is assumed that $S$ is Haar-measurable and hence itself becomes a measure space with respect to the restriction of $\mu$. Assume furthermore that the interior $S^\circ$ of $S$ (in the topology induced by $G$) is non-empty. The semigroup $S$ becomes a directed set under the relation $\succeq$, where $s\succeq t$ for $s,t\in S$ whenever $s-t\in S\cup\{0\}$; this makes it possible to speak of limits as $s\rightarrow\infty$ through $S$.  The dual group of $G$, consisting of all continuous bounded characters  $\chi:G\rightarrow\mathbb{C}$, is denoted by $\Gamma$, the set of continuous bounded characters on $S$ by $S^*$. It follows from the assumption that $S$ spans $G$ that the subset of $S^*$ of characters taking values in the unit circle $\mathbb{T}$ can (and will throughout) be identified with $\Gamma$. Two important examples of the above are the (semi)groups $\mathbb{Z}_{(+)}$ with counting measure and $\mathbb{R}_{(+)}$ with  Lebesgue measure. Here $S^*$ can be identified in a natural way with $\{\lambda\in\mathbb{C}:|\lambda|\leq1\}$ and $\{\lambda\in\mathbb{C}:\R \lambda\leq0\}$, respectively, and the dual group $\Gamma$ is $\mathbb{T}$ and $\ii\mathbb{R}$ in each case.

For $\Omega=G$ or $S,$ let  $L^1(\Omega)$ denote the algebra (under convolution) of functions $a:\Omega\rightarrow\mathbb{C}$ that are integrable with respect to (the restriction of) Haar measure and, given $a\in L^1(\Omega)$, define its Fourier transform  by $$\widehat{a}(\chi):=\int_\Omega a(s)\chi(s)\,\mathrm{d}\mu(s),$$ where $\chi$ is an element of $\Gamma$ or $S^*$, as appropriate. Given a closed subset $\Lambda$ of $\Gamma$, define  $J_\Lambda:=\{a\in L^1(G): \widehat{a}\equiv0\;\mbox{in a neighbourhood of $\chi\in\Lambda$}\}$ and $K_\Lambda:=\{a\in L^1(G): \widehat{a}(\chi)=0\;\mbox{for all $\chi\in\Lambda$}\}$. An element of $L^1(G)$ is said to be of \textsl{spectral synthesis} with respect to $\Lambda$ if it lies in the closure of $J_\Lambda$. Since $K_\Lambda$ is closed, any such function must be an element of $K_\Lambda$. If $K_\Lambda$ coincides with the closure of $J_\Lambda$, the set $\Lambda$ is said to be of \textsl{spectral synthesis}. 

For any closed subset $\Lambda$ of $\Gamma$, the map $W_\Lambda: a\mapsto \widehat{a}|_\Lambda$ is a well-defined contractive algebra homomorphism from $L^1(G)$ into $ C_0(\Lambda)$  whose kernel is $K_\Lambda$ and whose range, by the Stone-Weierstrass theorem, is dense in $C_0(\Lambda)$; see \cite[Theorem~1.2.4]{Rudin}. Since $K_\Lambda(G)$ is a closed ideal of $L^1(G)$,  $W_\Lambda$ induces a well-defined injective algebra homomorphism $ U_\Lambda:L^1(G)/K_\Lambda(G)\rightarrow C_0(\Lambda)$ which, by the Inverse Mapping Theorem, is an isomorphism precisely when it  is surjective. Since its range is dense in $C_0(\Lambda)$, this is the case if and only if the map is an isomorphic embedding (which in turn is equivalent to the dual operator $ U_\Lambda':M(\Lambda)\rightarrow K_\Lambda^\perp$ being either a surjection or an isomorphic embedding, where $M(\Lambda)$ denotes the set of  complex-valued regular measures on $\Lambda$ which have finite total variation; see for instance \cite[Appendix~C11]{Rudin}). When these conditions are satisfied, $\Lambda$ is said to be a \textsl{Helson set}, and the quantity $\alpha(\Lambda):=\| U_\Lambda^{-1}\|$ is known as its \textsl{Helson constant}. Since $U_\Lambda$ is contractive, $\alpha(\Lambda)\geq1$ for any Helson set $\Lambda$. For some examples of Helson sets, see \cite[Remark~5.6]{Zarr}.

In what follows, it will be assumed that the set $\{\widehat{a}:a\in L^1(S)\}$  separates points both from each other and from zero and, furthermore, that the interior $S^\circ$ is dense in $S$. For further details and discussion of these conditions, see for instance \cite{BV}. These assumptions ensure, in particular, that there exists a net $(\Omega_\alpha)$, known as a \textsl{F\o lner net}, of compact, measurable, non-null subsets of $S$ satisfying $$\lim_{\alpha\rightarrow\infty}\frac{\mu\big(\Omega_\alpha\bigtriangleup(\Omega_\alpha+s)\big)}{\mu(\Omega_\alpha)}=0,$$uniformly for $s$ in compact subsets of $S$. 

Given a Banach space $X$ and $\Omega=G$ or $S$ for $S$ and $G$ as above, a \textsl{representation} of  $\Omega$ on $X$ is a strongly continuous homomorphism $T:\Omega\rightarrow\mathcal{B}(X)$ which, if $0\in \Omega$, satisfies $T(0)=I.$ The representation is said to be \textsl{bounded} if $\sup\{\|T(s)\|:s\in\Omega\}<\infty$ and in this case, given $a\in L^1(\Omega)$, the operator $\widehat{a}(T)\in\mathcal{B}(X)$ is defined, for each $x\in X$, by $$\widehat{a}(T)x:=\int_\Omega a(s)T(s)x\,\dd\mu(s).$$ 

Given a bounded representation $T$ of $G$ on a Banach space $X$ and a closed subspace $\Lambda$ of the dual group $\Gamma$, the corresponding \textsl{spectral subspace} $M_T(\Lambda)$ is defined as  $$M_T(\Lambda):=\bigcap\nolimits_{a\in J_\Lambda}\Ker\widehat{a}(T),$$  the \textsl{(Arveson) spectrum} $\Sp(T)$ of $T$ as $$\Sp(T):=\bigcap\big\{\Lambda\subset\Gamma:\mbox{$\Lambda$ is closed and $M_T(\Lambda)=X$}\big\}.$$ Thus the spectrum is a closed subset of $\Gamma$, and it is shown in \cite[Theorems~8.1.4 and 8.1.12]{Ped}, respectively, that  $\Sp(T)$ is non-empty whenever $X$ is non-trivial and that it is compact if and only if $T$ is continuous with respect to the norm topology on $\mathcal{B}(X)$. If $T$ is a representation by isometries, this notion of spectrum coincides with the \textsl{finite L-spectrum} of \cite[Section~5.2]{Lyu}. By \cite[Proposition~8.1.9]{Ped}, furthermore, $\Sp(T)$ has the alternative description $$\Sp(T)=\big\{\chi\in\Gamma:|\widehat{a}(\chi)|\leq\|\widehat{a}(T)\|\;\mbox{for all $a\in L^1(G)$}\big\}.$$ Accordingly, given a bounded representation $T$ of a semigroup  $S$ on a Banach space $X$, the \textsl{spectrum} $\Sp(T)$ of $T$ is defined as $$\Sp(T):=\big\{\chi\in S^*:|\widehat{a}(\chi)|\leq\|\widehat{a}(T)\|\;\mbox{for all $a\in L^1(S)$}\big\},$$ and the \textsl{unitary spectrum} of $T$ is given by $\Spu(T):=\Sp(T)\cap\Gamma$; see \cite{BV} for  details. In the examples mentioned above, bounded semigroup representations correspond to a single power-bounded operator $T\in\mathcal{B}(X)$  if $S=\mathbb{Z}_+$ and to a bounded $C_0$-semigroup if $S=\mathbb{R}_+$. The spectrum is given by $\sigma(T)$ and $\sigma(A)$, respectively, where $A$ denotes the generator of the semigroup.

\section{A general Katznelson-Tzafriri type result}

The aim of this section is to prove the following generalisation of \cite[Theorem~2.1]{Leka}.

\begin{mythm}\label{Thm}
Let $T$ be a bounded representation of a semigroup $S$ on a Hilbert space $X$, and suppose that $a\in L^1(S)$. Then the following are equivalent:
\begin{enumerate}
\item[(i)]$\widehat{a}(\chi)=0$ for every $\chi\in\Spu(T);$
\item[(ii)] Given any F\o lner net $(\Omega_\alpha)$ for $S$ and any $\chi\in \Spu(T)$, \begin{equation}\label{erg}\lim_{\alpha\rightarrow\infty}\frac{1}{\mu(\Omega_\alpha)}\left\|\int_{\Omega_\alpha}\overline{\chi(s)}T(s)\widehat{a}(T)\,\mathrm{d}\mu(s)\right\|=0;\end{equation}
\item[(iii)] $\|T(s)\widehat{a}(T)\|\rightarrow0$ as $s\rightarrow\infty$.
\end{enumerate}
\end{mythm}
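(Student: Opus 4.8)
The implications $(iii)\Rightarrow(ii)$ and $(ii)\Rightarrow(i)$ are comparatively soft, so I would dispose of these first and concentrate effort on $(i)\Rightarrow(iii)$. For $(iii)\Rightarrow(ii)$: the F\o lner average $\mu(\Omega_\alpha)^{-1}\int_{\Omega_\alpha}\overline{\chi(s)}T(s)\widehat{a}(T)\,\dd\mu(s)$ has norm at most $\mu(\Omega_\alpha)^{-1}\int_{\Omega_\alpha}\|T(s)\widehat{a}(T)\|\,\dd\mu(s)$ since $|\chi(s)|=1$ for $\chi\in\Gamma$; if $\|T(s)\widehat{a}(T)\|\to0$ as $s\to\infty$ through $S$, this Ces\`aro-type average tends to $0$ by a routine argument using that the $\Omega_\alpha$ eventually escape any fixed sublevel set (more precisely, that any F\o lner net is eventually ``mostly'' supported far out in the directed set $S$, which follows from the asymptotic invariance property). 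For $(ii)\Rightarrow(i)$: fix $\chi\in\Spu(T)$. Applying the spectral characterisation $\Sp(T)=\{\chi\in S^*:|\widehat{b}(\chi)|\leq\|\widehat{b}(T)\|\ \forall b\in L^1(S)\}$ to the family $b_\alpha(s):=\mu(\Omega_\alpha)^{-1}\mathbbm{1}_{\Omega_\alpha}(s)\overline{\chi(s)}a$-convolved-appropriately --- or, more cleanly, observing that $\mu(\Omega_\alpha)^{-1}\int_{\Omega_\alpha}\overline{\chi(s)}T(s)\,\dd\mu(s)\to P_\chi$ strongly on $M_T(\{\chi\})$ by a mean ergodic theorem on Hilbert space, and that $\widehat{a}(T)$ maps into the relevant spectral data --- one extracts $|\widehat{a}(\chi)|\leq \lim_\alpha \mu(\Omega_\alpha)^{-1}\|\int_{\Omega_\alpha}\overline{\chi(s)}T(s)\widehat{a}(T)\,\dd\mu(s)\|=0$. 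The cleanest route here is to note that the Arveson spectrum gives $|\widehat{c}(\chi)\widehat{a}(\chi)|\le\|\widehat{c}(T)\widehat{a}(T)\|$ for all $c$, choose $c$ supported in $\Omega_\alpha$ mimicking $\overline{\chi}$, and let $\alpha\to\infty$.

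\textbf{The heart of the matter: $(i)\Rightarrow(iii)$.} This is where the Hilbert space hypothesis is essential and where I expect the real work to lie, following the strategy of L\'eka but adapting it to the semigroup setting. First, a reduction: it suffices to show that for each $x\in X$ one has $\|T(s)\widehat{a}(T)x\|\to0$, and then upgrade to norm convergence; but the uniform statement is genuinely stronger and I would instead work directly with operator norms, using that $\widehat{a}(T)^*\widehat{a}(T)$ or rather $T(s)\widehat{a}(T)(T(s)\widehat{a}(T))^*$ is a decreasing-in-a-suitable-sense family of positive operators whose behaviour is controlled by a scalar-valued spectral measure. Concretely: since $T$ is a bounded representation of $S$ on Hilbert space, after passing to the unitary part one studies the positive-definite function $s\mapsto T(s)\widehat{a}(T)$ and associates to it, via Bochner/Naimark-type dilation (or via the disk/half-plane transference available for $\mathbb{Z}$ and $\mathbb{R}$ generalised to $G$), a representing measure $\nu$ on the dual. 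The condition $(i)$ says $\widehat{a}$ vanishes on the unitary spectrum, which is exactly the support-type condition forcing $\nu(\{|\widehat{a}|>\epsilon\})$ to shrink; a Tauberian / Wiener-type argument then yields $\|T(s)\widehat{a}(T)\|\to0$. The key Hilbert-space input, as in L\'eka, is that a bounded (not contractive) Hilbert space representation still admits enough of this spectral/dilation machinery — via similarity to a unitary representation on the unitary spectral subspace, or via the $H^\infty$-type functional calculus and square-function estimates — whereas on a general Banach space one genuinely needs the stronger spectral synthesis hypothesis of the classical Katznelson-Tzafriri theorem.

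\textbf{Main obstacle.} The principal difficulty is proving $(i)\Rightarrow(iii)$ at the level of operator norms rather than orbits, and doing so for \emph{semigroups} $S$ rather than just $\mathbb{Z}_+$ or $\mathbb{R}_+$: one must pass from $S$ to the enveloping group $G$, restrict attention to the unitary spectral subspace (where the representation is by invertible operators and the $G$-machinery applies), and there invoke the Hilbert-space refinement — that every bounded unitary-spectrum representation behaves, for the purposes of Fourier-multiplier estimates, like a unitary one. I expect the argument to require: (a) the F\o lner net to produce an approximate projection onto $M_T(\{\chi\})$; (b) a careful splitting of $\widehat{a}(T)$ into a part annihilated near $\Spu(T)$ (handled by the classical semigroup Katznelson-Tzafriri theorem, i.e.\ spectral synthesis of singletons) and a small remainder; and (c) the Hilbert-space estimate controlling the remainder. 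Step (c), the genuinely Hilbert-space step, is the crux: everything else is transference and standard harmonic analysis on $G$.
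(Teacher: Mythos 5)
The soft implications in your plan are essentially sound. In particular, your ``cleanest route'' for (ii)$\Rightarrow$(i) actually works and is worth spelling out: take $c_\alpha:=\mu(\Omega_\alpha)^{-1}\mathbbm{1}_{\Omega_\alpha}\overline{\chi}\in L^1(S)$, so that $\widehat{c_\alpha}(\chi)=1$ and $\widehat{c_\alpha\ast a}(T)$ is exactly the averaged operator in \eqref{erg}; the definition of $\Spu(T)$ then gives $|\widehat{a}(\chi)|=|\widehat{c_\alpha\ast a}(\chi)|\leq\|\widehat{c_\alpha\ast a}(T)\|\rightarrow0$. This is a genuinely different (and more direct) closing of the cycle than the paper's, which proves (i)$\Rightarrow$(ii)$\Rightarrow$(iii)$\Rightarrow$(i) and only indicates a direct (ii)$\Rightarrow$(i) for $S=\mathbb{Z}_+$ and $S=\mathbb{R}_+$. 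Your (iii)$\Rightarrow$(ii) matches a remark in the paper (shift $\Omega_\alpha$ by a fixed $t\in S$ and use the F\o lner property), though ``the $\Omega_\alpha$ eventually escape any fixed sublevel set'' is not literally true and should be replaced by that shifting argument.

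The genuine gap is (i)$\Rightarrow$(iii), which you correctly identify as the crux but do not prove. You list candidate techniques --- operator-valued positive-definite functions, a Naimark-type representing measure, a Tauberian argument, ``similarity to a unitary representation on the unitary spectral subspace'', $H^\infty$-calculus and square functions --- without carrying any of them out, and the one concrete structural claim is false: a bounded Hilbert-space representation need not be similar to a contractive or unitary one (Foguel's power-bounded counterexample), so no such reduction is available. The monotonicity you hope to exploit for $T(s)\widehat{a}(T)\big(T(s)\widehat{a}(T)\big)^*$ likewise fails for merely bounded $T$. What the paper actually does is: (1) fix a Banach limit $\phi$ and form the Hilbert space $X_\phi$ with $\|\pi_\phi x\|^2=\phi(\|T(\cdot)x\|^2)$, yielding an isometric representation $T_\phi$ with $\Sp(T_\phi)\subset\Sp(T)$ and $\Ker\pi_\phi=\{x:\|T(s)x\|\rightarrow0\}$ --- valid for bounded, not just contractive, $T$, and the first place Hilbert space enters (the parallelogram law makes $X_\phi$ a Hilbert space); (2) extend $T_\phi$ to a unitary representation of $G=S-S$; (3) for unitary group representations prove $\|\widehat{a}(T)\|=\sup\{|\widehat{a}(\chi)|:\chi\in\Sp(T)\}$ via the Gelfand transform of the commutative $C^*$-algebra generated by $\{\widehat{b}(T):b\in L^1(G)\}$ (or via the spectral measure) --- this is the decisive Hilbert-space input that replaces spectral synthesis; (4) conclude $\widehat{a}(T_\phi)=0$ and hence $\|T(s)\widehat{a}(T)x\|\rightarrow0$ for every $x$; (5) upgrade from orbits to operator norm by an ultrapower-type construction applied to a putative net of bad unit vectors. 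None of these steps appears in your sketch in usable form, so the main implication remains unproved.
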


\begin{rem}\label{Q_rem}
In \cite[Theorem~2.1]{Leka}, conditions (ii) and (iii) above are presented in a slightly more general form, with the operator $\widehat{a}(T)$ replaced by an arbitrary  $Q\in\mathcal{B}(X)$ that commutes with the representation. The presentation here is restricted to the case $Q=\widehat{a}(T)$ purely for simplicity.
\end{rem}

The proof of this result will be broken up into a number of separate steps, all of which correspond to some part of the proof of \cite[Theorem~2.1]{Leka} given by L\'eka but typically with some modifications to accommodate the more general setting in which the representation need not be norm continuous. The following lemma constitutes the main step towards proving that (i)$\implies$(ii); it corresponds to \cite[Lemma~2.2]{Leka}. Note that the Hilbert space assumption is not required for this part of the argument.

\begin{mylem}\label{int_lem} Let $T$ be a bounded representation of a semigroup $S$ on a Banach space $X$, and let $a\in L^1(S)$.  Then, for all $\chi\in\Gamma$,  $$\lim_{\alpha\rightarrow\infty}\frac{1}{\mu(\Omega_\alpha)}\left\|\int_{\Omega_\alpha}\overline{\chi(s)}T(s)\big(\widehat{a}(T)-\widehat{a}(\chi)\big)\,\mathrm{d}\mu(s)\right\|=0,$$ where $(\Omega_\alpha)$ is any F\o lner net  for $S$ and the integral is taken in the strong sense. 
\end{mylem}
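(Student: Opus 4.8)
The plan is to reduce the statement to a convolution identity and then exploit the Følner property. First I would observe that, writing $\delta_s$ formally for evaluation at $s$, one has $T(s)\widehat{a}(T) = \widehat{(\delta_s * a)}(T)$ in the sense that $T(s)\widehat{a}(T)x = \int_S a(t) T(s+t)x\,\dd\mu(t)$; more to the point, for fixed $\chi \in \Gamma$ the integrand $\overline{\chi(s)}T(s)(\widehat{a}(T) - \widehat{a}(\chi))$ can be rewritten, after absorbing the character, in terms of the twisted function $a_\chi(t) := \overline{\chi(t)}a(t)$, which again lies in $L^1(S)$ with $\|a_\chi\|_1 = \|a\|_1$ and $\widehat{a_\chi}(\chi) = \widehat{a}(\chi)$ evaluated appropriately. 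The key algebraic point is that $\overline{\chi(s)}T(s)(\widehat{a}(T)-\widehat{a}(\chi))x = \int_S a(t)\big(\overline{\chi(s+t)}T(s+t)x - \overline{\chi(s)}\,\overline{\chi(t)}\cdot\text{(shifted copy)}\big)\,\dd\mu(t)$, so that averaging $\overline{\chi(s)}T(s)$ over $\Omega_\alpha$ and applying it to $\widehat{a}(T)-\widehat{a}(\chi)$ produces a difference between an average over $\Omega_\alpha$ and an average over the translate $\Omega_\alpha + t$ of the bounded vector-valued function $s \mapsto \overline{\chi(s)}T(s)x$.

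Concretely, I would set $F_\alpha := \frac{1}{\mu(\Omega_\alpha)}\int_{\Omega_\alpha}\overline{\chi(s)}T(s)\,\dd\mu(s) \in \mathcal{B}(X)$ (a net of operators, bounded in norm by $\sup_s\|T(s)\|$), and compute $F_\alpha(\widehat{a}(T) - \widehat{a}(\chi))x$. Using $F_\alpha \widehat{a}(T)x = \int_S a(t)\,\frac{1}{\mu(\Omega_\alpha)}\int_{\Omega_\alpha}\overline{\chi(s)}T(s+t)x\,\dd\mu(s)\,\dd\mu(t)$ and the substitution $s \mapsto s - t$ inside the inner integral (which replaces $\Omega_\alpha$ by $\Omega_\alpha + t$ and produces a factor $\overline{\chi(t)}$), one gets $F_\alpha\widehat{a}(T)x = \int_S \overline{\chi(t)}a(t)\,\frac{1}{\mu(\Omega_\alpha)}\int_{\Omega_\alpha+t}\overline{\chi(s)}T(s)x\,\dd\mu(s)\,\dd\mu(t)$. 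Meanwhile $\widehat{a}(\chi)F_\alpha x = \int_S \overline{\chi(t)}a(t)\,\frac{1}{\mu(\Omega_\alpha)}\int_{\Omega_\alpha}\overline{\chi(s)}T(s)x\,\dd\mu(s)\,\dd\mu(t)$ — wait, one needs here that $\widehat a(\chi)=\int_S a(t)\chi(t)\,\dd\mu(t)$, so the characters match up to conjugation conventions; I would fix the sign convention at the outset so that the two expressions differ precisely by replacing $\int_{\Omega_\alpha+t}$ with $\int_{\Omega_\alpha}$. Subtracting, $\|F_\alpha(\widehat{a}(T)-\widehat{a}(\chi))x\| \le \frac{1}{\mu(\Omega_\alpha)}\int_S |a(t)|\,\big\|\int_{\Omega_\alpha \triangle (\Omega_\alpha+t)}\overline{\chi(s)}T(s)x\,\dd\mu(s)\big\|\,\dd\mu(t) \le M\|x\| \int_S |a(t)|\,\frac{\mu(\Omega_\alpha \triangle(\Omega_\alpha+t))}{\mu(\Omega_\alpha)}\,\dd\mu(t)$, where $M = \sup_s\|T(s)\|$.

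The conclusion then follows by dominated convergence on the $t$-integral: the integrand is bounded by $2M\|x\|\,|a(t)| \in L^1(S)$ independently of $\alpha$, and for each fixed $t \in S$ the Følner property gives $\mu(\Omega_\alpha \triangle (\Omega_\alpha+t))/\mu(\Omega_\alpha) \to 0$. Hence $\sup_{\|x\|\le 1}\|F_\alpha(\widehat{a}(T)-\widehat{a}(\chi))x\| \to 0$, which is exactly the claimed operator-norm convergence. The main obstacle I anticipate is purely technical rather than conceptual: justifying the interchange of the $s$- and $t$-integrals (Fubini for the Bochner integral, using strong continuity and boundedness of $T$ together with integrability of $a$ and compactness of $\Omega_\alpha$) and making the translation substitution $s \mapsto s-t$ rigorous within the semigroup $S$ — one must be slightly careful since $\Omega_\alpha + t$ need not sit inside $S$ in a naive sense, though it does since $S$ is a semigroup and $t \in S$, so $\Omega_\alpha + t \subseteq S$ and the Haar translation-invariance of $\mu$ on the ambient group $G$ applies. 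Dominated convergence for nets indexed by the directed Følner net requires the uniformity clause in the Følner condition only if one wants uniformity in $t$; here pointwise-in-$t$ convergence suffices, so the plain (non-uniform) Følner property is enough, consistent with the lemma being stated for an arbitrary Følner net.
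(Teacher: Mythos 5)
Your computation up to the key estimate
\[
\Big\|F_\alpha\big(\widehat a(T)-\widehat a(\chi)\big)x\Big\|\;\le\; M\|x\|\int_S|a(t)|\,\frac{\mu\big(\Omega_\alpha\bigtriangleup(\Omega_\alpha+t)\big)}{\mu(\Omega_\alpha)}\,\dd\mu(t)
\]
is exactly the paper's: the author likewise applies Fubini and the translation $s\mapsto s+t$ to arrive at this same integral (the conjugation bookkeeping you flag does work out, since $\overline{\chi(u-t)}=\overline{\chi(u)}\chi(t)$ for unimodular $\chi$, consistent with the convention $\widehat a(\chi)=\int_S a(t)\chi(t)\,\dd\mu(t)$). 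The genuine gap is in your final step. You pass to the limit by ``dominated convergence on the $t$-integral'' and explicitly assert that pointwise-in-$t$ convergence suffices, so that the uniform-on-compacts clause of the F\o lner condition is not needed. But $(\Omega_\alpha)$ is a \emph{net}, not a sequence, and the dominated convergence theorem is false for nets: the net of indicator functions of finite subsets of $[0,1]$, directed by inclusion, is dominated by the constant $1$ and converges pointwise to $\mathbf 1_{[0,1]}$, yet every member has integral $0$. So pointwise convergence of $t\mapsto\mu(\Omega_\alpha\bigtriangleup(\Omega_\alpha+t))/\mu(\Omega_\alpha)$ to $0$ together with the bound $2|a(t)|$ does not yield convergence of the integrals along the net.

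The fix is precisely the uniformity you discarded, and it is how the paper concludes: given $\varepsilon>0$, choose a compact $K\subset S$ with $\int_{S\setminus K}|a|\,\dd\mu<\varepsilon/4M$; on $K$ the F\o lner ratio tends to $0$ \emph{uniformly} (this is built into the definition of a F\o lner net here), which gives $I_x(\alpha)\le M\|a\|_1\xi_K(\alpha)+2M\int_{S\setminus K}|a|\,\dd\mu$ with $\xi_K(\alpha)\to0$. With that replacement for the dominated-convergence step, your argument is complete and coincides with the paper's.
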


\begin{proof}[\textsc{Proof}]
With $a\in L^1(S)$ and $\chi\in\Gamma$ fixed, let $x\in X$ have unit norm and set $$I_x(\alpha):=\frac{1}{\mu(\Omega_\alpha)}\left\|\int_{\Omega_\alpha}\overline{\chi(s)}T(s)\big(\widehat{a}(T)x-\widehat{a}(\chi)x\big)\,\mathrm{d}\mu(s)\right\|.$$ Then, by a simple application of Fubini's theorem, $$\begin{aligned}I_x(\alpha)&\leq M\int_S\frac{\mu\big(\Omega_\alpha\bigtriangleup(\Omega_\alpha+s)\big)}{\mu(\Omega_\alpha)}|a(s)|\,\mathrm{d}\mu(s),\end{aligned}$$ where $M:=\sup\{\|T(s)\|:s\in S\}$. Let $\varepsilon>0$. Since $a\in L^1(S)$, there exists a compact subset $K$ of $S$ such that $\int_{S\backslash K}|a(s)|\,\mathrm{d}\mu(s)<\varepsilon/4M.$ Defining $$\xi_K(\alpha):=\sup\left\{\frac{\mu(\Omega_\alpha\bigtriangleup(\Omega_\alpha+s))}{\mu(\Omega_\alpha)}:s\in K\right\},$$ it follows from the definition of a F\o lner net that $\xi_K(\alpha)\rightarrow0$ as $\alpha\rightarrow\infty$. Since  $$I_x(\alpha)\leq  M\|a\|_1\xi_K(\alpha)+2M\int_{S\backslash K}|a(s)|\,\mathrm{d}\mu(s),$$ $I_x(\alpha)<\varepsilon$ for all sufficiently large $\alpha$, and the result follows.
\end{proof}

\begin{mycor}\label{cor:erg_cond}
Let $T$ be a bounded representation of a semigroup $S$ on a Banach space $X$, and let $\chi\in\Gamma$. Suppose that $a\in L^1(S)$ is such that $\widehat{a}(\chi)=0$. Then \eqref{erg} holds for any  F\o lner net  $(\Omega_\alpha)$ for $S$. 
\end{mycor}

The next result is an important step towards establishing the implication (ii)$\implies$(iii) in Theorem~\ref{Thm} and should be compared with \cite[Lemma~2.4]{Leka}.

\begin{myprp}\label{zero}
Let $S$ be a semigroup and  $T$ a representation of a group $G=S-S$ by unitary operators on a Hilbert space $X$. Suppose that $a\in L^1(G)$ and that, for each $\chi\in \Sp(T)$, $$\lim_{\alpha\rightarrow\infty}\frac{1}{\mu(\Omega_\alpha)}\left\|\int_{\Omega_\alpha}\overline{\chi(s)}T(s)\widehat{a}(T)\,\mathrm{d}\mu(s)\right\|=0,$$ where $(\Omega_\alpha)$ is any F\o lner net for $S$. Then $\widehat{a}(T)=0$.
\end{myprp}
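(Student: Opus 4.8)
The plan is to exploit the fact that $T$ is a unitary representation of the group $G$, which by the SNAG theorem (Stone--Naimark--Ambrose--Godement, or Pedersen's spectral theory for representations) gives a projection-valued measure $E$ on $\Gamma$ supported exactly on $\Sp(T)$ such that $T(s)=\int_\Gamma \chi(s)\,\dd E(\chi)$ for all $s\in G$, and correspondingly $\widehat{a}(T)=\int_\Gamma \widehat{a}(\chi)\,\dd E(\chi)$. The goal is then to show $\widehat{a}\equiv 0$ on $\Sp(T)$, for then $\widehat a(T)=0$ follows immediately from the functional calculus. So the whole content is: the ergodic hypothesis at each $\chi\in\Sp(T)$ forces $\widehat a(\chi)=0$.

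First I would fix $\chi\in\Sp(T)$ and analyse the ergodic average $A_\alpha:=\mu(\Omega_\alpha)^{-1}\int_{\Omega_\alpha}\overline{\chi(s)}T(s)\,\dd\mu(s)$. Via the functional calculus this is the operator $\int_\Gamma \big(\mu(\Omega_\alpha)^{-1}\int_{\Omega_\alpha}\overline{\chi(s)}\eta(s)\,\dd\mu(s)\big)\,\dd E(\eta)$, i.e. multiplication by the scalar function $\eta\mapsto \widehat{(\mathbf 1_{\Omega_\alpha}\overline\chi)}(\eta)/\mu(\Omega_\alpha)$. The Følner property says these averaging kernels are bounded by $1$ in modulus, tend to $1$ at $\eta=\chi$, and — this is the key pointwise fact — the difference from the value at $\chi$ is controlled by $\mu(\Omega_\alpha\triangle(\Omega_\alpha+s))/\mu(\Omega_\alpha)$ integrated against the relevant mass, so that on the spectral subspace $E(\{\chi\})X$ the averages converge strongly to the identity, while one expects them to kill the rest. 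Concretely, I would write $A_\alpha\widehat a(T)x$ for $x$ in the range of $E(\{\chi\})$: on that subspace $T(s)$ acts as multiplication by $\chi(s)$, so $A_\alpha$ acts as the identity and $\widehat a(T)$ acts as multiplication by $\widehat a(\chi)$; hence $A_\alpha\widehat a(T)x=\widehat a(\chi)x$. The hypothesis forces $\|A_\alpha\widehat a(T)\|\to 0$, so taking any unit vector $x\in E(\{\chi\})X$ (which exists precisely because $\chi\in\Sp(T)=\supp E$ — here I must take $\chi$ such that $E(\{\chi\})\neq 0$; for atoms this is immediate, but in general $\chi\in\supp E$ only guarantees $E(V)\neq 0$ for neighbourhoods $V$ of $\chi$, not $E(\{\chi\})\neq0$).

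That last parenthetical is exactly where the main obstacle lies: $\Sp(T)$ need not consist of atoms of $E$, so I cannot simply plug a single eigenvector in. The fix is to work with a neighbourhood $V$ of $\chi$ and a unit vector $x\in E(V)X$, and to estimate $\|A_\alpha\widehat a(T)x-\widehat a(\chi)x\|$ directly: on $E(V)X$ one has $\|A_\alpha\widehat a(T)x-\widehat a(\chi)x\|^2=\int_V |k_\alpha(\eta)\widehat a(\eta)-\widehat a(\chi)|^2\,\dd\langle E(\eta)x,x\rangle$ where $k_\alpha$ is the Følner kernel. Using continuity of $\widehat a$ at $\chi$, boundedness $|k_\alpha|\le 1$, and the convergence $k_\alpha(\chi)\to1$, one shrinks $V$ first and then sends $\alpha\to\infty$ to make this arbitrarily small; combined with $\|A_\alpha\widehat a(T)\|\to0$ and $\|x\|=1$ this yields $|\widehat a(\chi)|<\varepsilon$ for every $\varepsilon>0$, hence $\widehat a(\chi)=0$. (Alternatively, and perhaps more in the spirit of the paper, one can invoke Lemma~\ref{int_lem} with the group $G$ in place of $S$: it gives $\mu(\Omega_\alpha)^{-1}\|\int_{\Omega_\alpha}\overline\chi(s)T(s)(\widehat a(T)-\widehat a(\chi))\,\dd\mu(s)\|\to0$; subtracting this from the hypothesis yields $\widehat a(\chi)\,\|A_\alpha\|\to0$, and since $\|A_\alpha\|\ge\|A_\alpha\, E(V)\|\to1$ for $V\ni\chi$ shrinking, again $\widehat a(\chi)=0$.) This second route is cleaner and avoids the spectral-measure bookkeeping almost entirely; I would present it as the main argument, reserving the functional-calculus picture as motivation.

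Finally, having shown $\widehat a$ vanishes on $\Sp(T)=\supp E$, I conclude $\widehat a(T)=\int_{\Sp(T)}\widehat a(\chi)\,\dd E(\chi)=0$. The only genuinely delicate point to get right in the write-up is the lower bound $\liminf_\alpha\|A_\alpha\|\ge 1$ near any $\chi\in\Sp(T)$, which rests on the characterisation $\Sp(T)=\{\chi:|\widehat b(\chi)|\le\|\widehat b(T)\|\ \forall b\in L^1(G)\}$ from the preliminaries together with the fact that the Følner averages $A_\alpha$ are, up to normalisation, of the form $\widehat{b_\alpha}(T)$ with $\widehat{b_\alpha}(\chi)\to1$.
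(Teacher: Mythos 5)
Your main argument (the ``second route'' in your parenthetical remark) is correct, and it is genuinely different from the paper's proof. You reduce everything to the scalar $\widehat{a}(\chi)$: Lemma~\ref{int_lem} gives $\|A_\alpha\widehat{a}(T)-\widehat{a}(\chi)A_\alpha\|\rightarrow0$, the hypothesis gives $\|A_\alpha\widehat{a}(T)\|\rightarrow0$, and $\|A_\alpha\|\geq1$ because $A_\alpha=\widehat{b_\alpha}(T)$ with $\widehat{b_\alpha}(\chi)=1$ and $\chi\in\Sp(T)$; hence $\widehat{a}$ vanishes on $\Sp(T)$ and the spectral (SNAG) calculus finishes the proof. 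The paper argues differently: for each compact $\Lambda\subset\Sp(T)$ it forms the commutative unital $C^*$-algebra generated by $Q_\Lambda=\widehat{a}(T)E(\Lambda)$ and the truncated representation $T_\Lambda$, shows that every Gelfand character of this algebra restricts to an element of $\Sp(T)$, and then uses the hypothesis to annihilate the Gelfand transform of $Q_\Lambda$. The paper's route buys generality: it works verbatim for any normal $Q$ commuting with $T$ (and for arbitrary commuting $Q$ via $Q^*Q$), as recorded in the remark following the proposition, whereas your argument is tied to $Q=\widehat{a}(T)$ precisely because it pivots on the scalar $\widehat{a}(\chi)$. Your route is the more elementary of the two. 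Two caveats. First, Lemma~\ref{int_lem} is stated for $a\in L^1(S)$ and translates by elements of $S$; for $a\in L^1(G)$ you need $\mu\big(\Omega_\alpha\bigtriangleup(\Omega_\alpha+u)\big)/\mu(\Omega_\alpha)\rightarrow0$ uniformly on compact subsets of $G$, which does follow from the $S$-version via the subadditivity $\mu\big(\Omega\bigtriangleup(\Omega+s_1-s_2)\big)\leq\mu\big(\Omega\bigtriangleup(\Omega+s_1)\big)+\mu\big(\Omega\bigtriangleup(\Omega+s_2)\big)$, but this should be said. Second, the ``first route'' you sketch and then discard is genuinely broken, not merely inconvenient: for $\eta\neq\chi$ the averaging kernel satisfies $k_\alpha(\eta)\rightarrow0$, not $k_\alpha(\eta)\rightarrow1$, so $\|A_\alpha\widehat{a}(T)x-\widehat{a}(\chi)x\|$ tends to roughly $|\widehat{a}(\chi)|\,\|E(V\setminus\{\chi\})x\|$ and the estimate becomes circular whenever $E(\{\chi\})=0$; you were right to demote it to motivation.
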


\begin{proof}[\textsc{Proof}]
Writing $B(\Gamma)$ for the set of Borel subsets of the dual group $\Gamma$, let $E:B(\Gamma)\rightarrow \mathcal{B}(X)$ denote the spectral measure associated with $T$ (see \cite[Theorem~8.3.2]{Ped}) and, for $s\in G$ and $\Lambda\in B(\Gamma)$, let $T_\Lambda(s):=T(s)E(\Lambda)$. Then \begin{equation*}\label{spectral1}T_\Lambda(s):=\int_\Lambda \chi(s)\,\mathrm{d}E(\chi),\end{equation*} the integral being taken in the weak sense, and, by Fubini's theorem,  \begin{equation}\label{spectral}\widehat{b}(T_\Lambda)=\int_\Lambda \widehat{b}(\chi)\,\mathrm{d}E(\chi)\end{equation} for all $b\in L^1(G)$. Thus if $\Lambda\in B(\Gamma)$ is closed and $b\in J_\Lambda(G)$, then $\widehat{b}(T_\Lambda)=0$, and it follows that $M_{T_\Lambda}(\Lambda)=X$, so that $\Sp(T_\Lambda)\subset\Lambda$. Choosing $\Lambda\in B(\Gamma)$ to be compact ensures that the representation $T_\Lambda$ of $G$ on $X$ is norm continuous.

Set $Q:=\widehat{a}(T)$ and, for  a given compact subset $\Lambda$ of $\Sp(T)$, define  $Q_\Lambda:=QE(\Lambda)$, noting that $Q_\Lambda$ is normal and that $Q_\Lambda\rightarrow Q$ in the weak (and indeed the strong)  operator topology as $\Lambda$ approaches $\Sp(T)$ through compact subsets. Furthermore, let $\mathcal{A}_\Lambda$ denote the commutative unital $C^*$-algebra  generated by $\{Q_\Lambda,Q_\Lambda^*\}\cup\{T_\Lambda(s):s\in G\}$, and  let  $\Delta(\mathcal{A}_\Lambda)$ denote its character space. Write $\Phi_\Lambda:\mathcal{A}_\Lambda\rightarrow C_0(\Delta(\mathcal{A}_\Lambda))$ for the Gelfand transform of $\mathcal{A}_\Lambda$, which is an isometric $*$-isomorphism, and consider the map $\chi_\xi:G\rightarrow\mathbb{C}\backslash\{0\}$ given, for $\xi\in\Delta(\mathcal{A}_\Lambda)$ and $s\in G$, by $\chi_\xi(s):= \Phi_\Lambda(T_\Lambda(s))(\xi).$ Since the representation $T_\Lambda$ is norm continuous, $\chi_\xi$ is a continuous group homomorphism, and the fact that each $\xi\in\Delta(\mathcal{A}_\Lambda)$ is a bounded linear functional on $\mathcal{A}_\Lambda$ with $\|\xi\|=|\xi(E(\Lambda))|=1$ implies that $|\chi_\xi(s)|\leq1$ for all $s\in G$. Hence $\chi_\xi\in\Gamma$. Moreover, if $b\in L^1(G)$, then $$\big|\widehat{b}(\chi_\xi)\big|=\left|\xi\left(\int_Gb(s)T_\Lambda(s)\,\mathrm{d}\mu(s)\right)\right|\leq\big\|\widehat{b}(T_\Lambda)\big\|,$$ which is to say that $\chi_\xi\in \Sp(T_\Lambda)$, and hence $\chi_\xi\in\Sp(T)$. Let $g_\Lambda:=\Phi_\Lambda(Q_\Lambda)$. Then $$\begin{aligned}|g_\Lambda(\xi)|&=\frac{1}{\mu(\Omega_\alpha)}\left|\int_{\Omega_\alpha}|\chi_\xi(s)|^2g_\Lambda(\xi)\,\mathrm{d}\mu(s)\right|\\&\leq \frac{1}{\mu(\Omega_\alpha)}\left\|\Phi_\Lambda\left(\int_{\Omega_\alpha}\overline{\chi_\xi(s)}T_\Lambda(s)Q_\Lambda\,\mathrm{d}\mu(s)\right)\right\|_\infty\\&=\frac{1}{\mu(\Omega_\alpha)}\left\|\int_{\Omega_\alpha}\overline{\chi_\xi(s)}T_\Lambda(s)Q_\Lambda\,\mathrm{d}\mu(s)\right\|\\&\leq \frac{1}{\mu(\Omega_\alpha)}\left\|\int_{\Omega_\alpha}\overline{\chi_\xi(s)}T(s)Q\,\mathrm{d}\mu(s)\right\|,\end{aligned}$$for any $\xi\in \Delta(\mathcal{A}_\Lambda)$ and letting $\alpha\rightarrow\infty$ shows that $g_\Lambda=0.$ Since $\Phi_\Lambda$ is an isometry, it follows that $Q_\Lambda=0$, and allowing $\Lambda$ to approach $\Sp(T)$ through compact subsets gives $Q=0$, as required. 
\end{proof}

\begin{rem}
The result remains true when $\widehat{a}(T)$ is replaced by any $Q\in\mathcal{B}(X)$ which commutes with $T$. If $Q$ is normal, this follows from the same argument as above; the general case can then be obtained by considering the operator $Q^* Q$; see also \cite[Lemma~2.4]{Leka}.
\end{rem}

 Propositions~\ref{pwlim} and \ref{lim} below correspond in essence to the two main stages in the proof of \cite[Theorem~2.1]{Leka} and show,  via an intermediate result for the strong operator topology,  that  (ii)$\implies$(iii) in Theorem~\ref{Thm}.

\begin{myprp}\label{pwlim}
 Let $T$ be a bounded representation of a semigroup $S$ on a Hilbert space $X$, and let $a\in L^1(S)$. Suppose that, for some F\o lner net $(\Omega_\alpha)$  for $S$, \eqref{erg} holds for all $\chi\in\Spu(T)$. Then, for all $x\in X$, $\|T(s)\widehat{a}(T)x\|\rightarrow0$ as $s\rightarrow\infty$.
\end{myprp}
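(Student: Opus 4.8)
The plan is to reduce the statement to the unitary case settled in Proposition~\ref{zero} by passing to the \emph{isometric limit} of $T$. The existence of a F\o lner net for $S$ furnishes an invariant mean $m$ on $S$, and one forms the sesquilinear form $[x,y]:=m_s\bigl(\langle T(s)x,T(s)y\rangle\bigr)$ on $X$; since $\|T(s)\|\le M$ for all $s$ this satisfies $0\le[x,x]\le M^2\|x\|^2$, and by translation invariance of $m$ one has $[T(t)x,T(t)y]=[x,y]$ for every $t\in S$. Quotienting by the null space $\{x:[x,x]=0\}$ and completing produces a Hilbert space $\tilde X$, a bounded operator $J\colon X\to\tilde X$ with dense range and $\|Jx\|^2=[x,x]$, and an isometric representation $\tilde T$ of $S$ on $\tilde X$ with $\tilde T(s)J=JT(s)$. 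By the standard construction of the minimal unitary extension of a semigroup of isometries (available since $G=S-S$), $\tilde T$ extends to a unitary representation $\hat T$ of $G$ on a Hilbert space $\hat X\supseteq\tilde X$ with $\hat T(s)|_{\tilde X}=\tilde T(s)$ for $s\in S$ and $\hat X=\overline{\bigcup_{t\in S}\hat T(-t)\tilde X}$. A short computation, using that $\widehat b(T)$ commutes with the representation, gives $\|\widehat b(\hat T)\|\le\|\widehat b(\tilde T)\|\le\|\widehat b(T)\|$ for all $b\in L^1(S)$; together with the unitarity of $\hat T$ this yields the crucial inclusion $\Sp(\hat T)\subseteq\Sp(T)\cap\Gamma=\Spu(T)$.

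With this in hand I would transfer the ergodic hypothesis. Fix $\chi\in\Sp(\hat T)$, so $\chi\in\Spu(T)$ and \eqref{erg} holds at $\chi$. Intertwining the averaged operators through $J$, and using once more that $\widehat a(T)$ commutes with each $T(s)$, one gets $\mu(\Omega_\alpha)^{-1}\bigl\|\int_{\Omega_\alpha}\overline{\chi(s)}\hat T(s)\widehat a(\hat T)\,\mathrm{d}\mu(s)\bigr\|\le\mu(\Omega_\alpha)^{-1}\bigl\|\int_{\Omega_\alpha}\overline{\chi(s)}T(s)\widehat a(T)\,\mathrm{d}\mu(s)\bigr\|$ on $\tilde X$, and the bound persists on all of $\hat X$ because $\hat X$ is the closed span of the subspaces $\hat T(-t)\tilde X$ and each $\hat T(-t)$ is unitary. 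Thus $\hat T$, a unitary representation of $G=S-S$, meets the hypotheses of Proposition~\ref{zero}, whence $\widehat a(\hat T)=0$ and therefore $J\widehat a(T)x=\widehat a(\hat T)Jx=0$, i.e.\ $m_s\bigl(\|T(s)\widehat a(T)x\|^2\bigr)=0$, for every $x\in X$. To finish, note that a bounded non-negative function on $S$ with vanishing invariant mean cannot be bounded below by a positive constant on any tail $\{s\succeq s_0\}$ (such a tail has invariant mean one); hence $\liminf_{s\to\infty}\|T(s)\widehat a(T)x\|=0$. Since $\sup_{s\succeq s_0}\|T(s)\widehat a(T)x\|=\sup_{s\succeq s_0}\|T(s-s_0)T(s_0)\widehat a(T)x\|\le M\|T(s_0)\widehat a(T)x\|$ for all $s_0$, choosing $s_0$ along a net realising this limit inferior forces $\|T(s)\widehat a(T)x\|\to0$, as claimed.

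The delicate part is the first paragraph: carrying out the isometric-limit construction and its unitary extension in the present generality---where $T$ need not be norm continuous and $S$ is an arbitrary abelian semigroup of the type fixed in Section~2---and, above all, establishing the spectral inclusion $\Sp(\hat T)\subseteq\Spu(T)$, which is exactly what allows the ergodic hypothesis, assumed only on $\Spu(T)$, to be fed into Proposition~\ref{zero} for $\hat T$. Everything else is routine bookkeeping, apart from the elementary observation---used at the very end---that for a bounded representation an individual orbit converges to $0$ precisely when $0$ is one of its cluster values.
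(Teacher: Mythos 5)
Your argument is correct and follows essentially the same route as the paper: pass to the isometric limit of $T$ (the paper uses a Banach limit $\phi$ and the construction of \cite[Proposition~3.1]{BV} where you use an invariant mean, which amounts to the same thing), extend to a unitary representation of $G=S-S$ with spectrum contained in $\Spu(T)$, transfer the ergodic condition through the intertwiner using that the averaged operators commute with the representation, and invoke Proposition~\ref{zero}. The only cosmetic difference is at the end, where the paper simply cites the description $\Ker\pi_\phi=\{x:\|T(s)x\|\rightarrow0\}$ from the limit construction, whereas you rederive this fact directly from the vanishing of the invariant mean of $s\mapsto\|T(s)\widehat{a}(T)x\|^2$ together with the submultiplicativity of orbits.
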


\begin{proof}[\textsc{Proof}]
Fix a Banach limit $\phi$ on $L^\infty(S)$. A construction analogous to \cite[Proposition~3.1]{BV} and  \cite[Section~1]{KvN} shows that there exist a Hilbert space $X_\phi$, a representation $T_\phi$ of $S$ on $X_\phi$ by isometries with $\Sp(T_\phi)\subset\Sp(T)$ and an operator $\pi_\phi:X\rightarrow X_\phi$ with the following properties: $\pi_\phi$ is bounded with norm $\|\pi_\phi\|\leq \sup\{\|T(s)\|:s\in S\}$, $\|\pi_\phi(x)\|^2=\phi(\|T(\cdot)x\|^2)$ for all $x\in X$,  $\Ran\pi_\phi$ is dense in $X_\phi$, $\Ker\pi_\phi=\{x\in X:\|T(s)x\|\rightarrow0\;\mbox{as}\;s\rightarrow\infty\}$
 and $\pi_\phi T(s)= T_\phi(s)\pi_\phi$ for all $s\in S$ (so $\pi_\phi$ is an \textsl{intertwining operator}). In particular, for any  operator $Q\in\mathcal{B}(X)$ that commutes with $T$, the operator  $Q_\phi\in\mathcal{B}(X_\phi)$ defined by $\pi_\phi Q=Q_\phi\pi_\phi$ satisfes $\|Q_\phi\|\leq\|Q\|$. By a construction analogous to  \cite[Proposition~2.1]{BG} (see also \cite[Proposition~3.2]{BV}, \cite{BY}, \cite{Dou} and \cite{Ito}), there exist a further Hilbert space $Y_\phi$, a representation $T_G$ of the group $G=S-S$ by unitary operators on $Y_\phi$ with $\Sp(T_G)=\Spu(T_\phi)$ and an isometric intertwining operator $\pi_G:X_\phi\rightarrow Y_\phi$ such that $\{T_G(-s)\pi_G(x):s\in S, x\in X_\phi\}$ is dense in $Y_\phi$. The latter implies, in particular, that $\|Q_G\|=\|Q_\phi\|$ for all $Q_\phi\in\mathcal{B}(X_\phi)$ and all $Q_G\in\mathcal{B}(X_G)$ which commute with $T_G$ and satisfy $\pi_G Q_\phi=Q_G\pi_G$. Thus it is possible to assume, sacrificing only the density condition on the range of the intertwining operator, that $T_\phi$ itself is in fact a representation of $G$ by unitary operators on $X_\phi$. 

Now, given $\chi\in\Sp(T_\phi)$, define operators $Q_\alpha\in\mathcal{B}(X)$ and $Q_{\phi,\alpha}\in\mathcal{B}(X_\phi)$ as \begin{equation}\label{Ba}Q_\alpha:=\frac{1}{\mu(\Omega_\alpha)}\int_{\Omega_\alpha}\overline{\chi(s)}T(s)\widehat{a}(T)\,\mathrm{d}\mu(s)\end{equation} and $$Q_{\phi,\alpha}:=\frac{1}{\mu(\Omega_\alpha)} \int_{\Omega_\alpha}\overline{\chi(s)}T_\phi(s)\widehat{a}(T_\phi)\,\mathrm{d}\mu(s).$$   Then $\pi_\phi Q_\alpha=Q_{\phi,\alpha}\pi_\phi$, from which it follows  that $\|Q_{\phi,\alpha}\|\leq\|Q_\alpha\|$. In particular, $\|Q_{\phi,\alpha}\|\rightarrow0$ as $\alpha\rightarrow\infty$. Identifying $L^1(S)$  in the natural way with a subset of $L^1(G)$, it follows from  Proposition~\ref{zero} applied to $T_\phi$, $X_\phi$ and $a$ that $\widehat{a}(T_\phi)=0$. In particular, $\pi_\phi(\widehat{a}(T)x)=\widehat{a}(T_\phi)\pi_\phi(x)=0$ for any $x\in X$, so the result follows from the description of $\Ker\pi_\phi$.\end{proof}

\begin{myprp}\label{lim}
 Let $T$ be a bounded representation of a semigroup $S$ on a Hilbert space $X$, and let $a\in L^1(S)$. Suppose that, for some F\o lner net $(\Omega_\alpha)$  for $S$, equation~\eqref{erg} is satisfied for all $\chi\in\Spu(T)$. Then $\|T(s)\widehat{a}(T)\|\rightarrow0$ as $s\rightarrow\infty.$
\end{myprp}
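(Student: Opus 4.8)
The plan is to promote the strong-operator convergence of Proposition~\ref{pwlim}---which already gives $\|T(s)\widehat{a}(T)x\|\to 0$ for every $x\in X$---to convergence in operator norm; the whole point is the uniformity over the closed unit ball of $X$, and this is where the Hilbert space structure, via Proposition~\ref{zero} and the dilation employed in the proof of Proposition~\ref{pwlim}, must be pushed further.

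The first step is a simple reduction. Writing $M:=\sup_{s\in S}\|T(s)\|$, the relation $T(s)\widehat{a}(T)=T(s-s_0)\bigl(T(s_0)\widehat{a}(T)\bigr)$, valid whenever $s\succeq s_0$, yields $\|T(s)\widehat{a}(T)\|\le M\|T(s_0)\widehat{a}(T)\|$ for all such $s$; hence $\limsup_{s\to\infty}\|T(s)\widehat{a}(T)\|\le M\inf_{s_0\in S}\|T(s_0)\widehat{a}(T)\|$, and since the net $s\mapsto\|T(s)\widehat{a}(T)\|$ is non-negative its limit exists and is zero precisely when $\inf_{s\in S}\|T(s)\widehat{a}(T)\|=0$. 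It therefore suffices to produce, for each $\varepsilon>0$, a single $s\in S$ with $\|T(s)\widehat{a}(T)\|<\varepsilon$.

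I would establish this last point by contradiction. Suppose $c:=\inf_{s\in S}\|T(s)\widehat{a}(T)\|>0$ and choose, for each $s\in S$, a unit vector $x_s$ with $\|T(s)\widehat{a}(T)x_s\|\ge c/2$. As in the proof of Proposition~\ref{pwlim}, each vector $\widehat{a}(T)x_s$ lies in the stable subspace $X_0:=\{x\in X:\|T(s)x\|\to 0\}$, yet its orbit is still ``large'' at the prescribed time $s$. The idea is to pass, along an ultrafilter $\mathcal{U}$ on $S$ refining the filter of tails, to an ultrapower $X^{\mathcal{U}}$ on which the operators $T(s)$ assemble---after the same reductions (passage to the stable quotient $\pi_\phi$, then to a unitary dilation) used for Proposition~\ref{pwlim}---into a unitary representation of $G$ whose unitary spectrum is again contained in $\Spu(T)$; by Proposition~\ref{zero} the corresponding functional calculus of $\widehat{a}$ then vanishes, whereas the data $(\widehat{a}(T)x_s)_s$ descends to a non-zero element of $X^{\mathcal{U}}$ exhibiting the opposite, a contradiction. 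The main obstacle---and the genuinely Hilbert-space-specific part of the argument---is to arrange this limiting passage so that it is simultaneously compatible with the shift structure needed to obtain a bona fide representation and faithful on the operator norm: strong-$*$ convergence of $T(s)\widehat{a}(T)$ to $0$ does not of itself force norm convergence, and closing this gap is precisely what the construction must accomplish.
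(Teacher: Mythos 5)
Your overall architecture --- argue by contradiction, push the offending net of unit vectors into an ultrapower-type space along an ultrafilter refining the tails filter, and then invoke the stable-quotient/unitary-dilation machinery of Proposition~\ref{pwlim} together with Proposition~\ref{zero} --- is exactly the strategy of the paper, and your preliminary reduction to showing $\inf_{s\in S}\|T(s)\widehat{a}(T)\|=0$ is correct (though not needed; the paper works directly with a net $s_\beta\rightarrow\infty$ violating the conclusion). But the step you flag as ``the main obstacle'' is not a detail to be arranged afterwards: it is the substance of the proof, and your proposal does not close it.

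Two things are missing. First, the naive ultrapower of $X$ along $\mathcal{U}$ does not carry a strongly continuous representation of $S$ (entrywise application of $T(s)$ to an arbitrary bounded net is not strongly continuous in $s$), so neither $\widehat{a}(T_A)$ nor Proposition~\ref{pwlim} is available on it. The paper's fix is to work with a smaller space $X_A^\infty$ containing the nets of the form $(\widehat{c}(T)x_\alpha)$ with $c\in L^1(S)$, on which the induced representation is again a bounded representation with spectrum $\Sp(T)$. This forces a second fix: your witnesses $(x_s)$ themselves need not lie in $X_A^\infty$, so one replaces them by $(\widehat{b}(T)x_s)$, where $b\in L^1(S)$ satisfies $\|b\|_1=1$ and $\|a*b-a_t\|_1<\varepsilon/2M^3$ for a fixed $t\in S^\circ$; the resulting estimate $\|\pi_A(T(s)\widehat{a*b}(T)x_s)\|\geq\liminf_s\|T(s+t)\widehat{a}(T)x_s\|-M^2\|a*b-a_t\|_1$ is precisely what transfers the lower bound to a vector of $X_A$ whose orbit under $T_A(s)\widehat{a}(T_A)$ stays bounded away from zero. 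Second, to apply Proposition~\ref{pwlim} to $T_A$ you must verify that hypothesis~\eqref{erg} holds for $T_A$ and every $\chi\in\Spu(T_A)$; this requires both $\Spu(T_A)\subset\Spu(T)$ and the estimate $\|Q_{A,\alpha}\|\leq\|Q_\alpha\|$, which the paper obtains from the fact that the averaged operator acts entrywise on $X_A^\infty$ and that $\pi_A$ is a contractive surjection. None of this is automatic, and your argument as written stops exactly where these verifications begin.
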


\begin{proof}[\textsc{Proof}]
Suppose, for the sake of contradiction, that \eqref{erg} holds for all $\chi\in\Spu(T)$ and some F\o lner net $(\Omega_\alpha)$ but that there exist  $\varepsilon>0$ and a net $(s_\beta)$ in $S$, with indexing set $A$ say, such that $s_\beta\rightarrow\infty$ as $\beta\rightarrow\infty$ and, for some suitable sequence $(y_\beta)$ of unit vectors in $X$,   $\|T(s_\beta)\widehat{a}(T)y_\beta\|\geq \varepsilon$ for all $\beta\in A$.  Letting $M:=\sup\{\|T(s)\|:s\in S\}$,  it follows that $\|T(s)\widehat{a}(T)y_\beta\|\geq \varepsilon M^{-1}$ whenever $s_\beta-s\in S$. Fix  $t\in S^\circ$ and, essentially as in \cite[Section~1.1.8]{Rudin}, let  $b\in L^1(S)$ satisfy $\|b\|_1=1$ and  $\|a*b-a_{t}\|_1<\varepsilon/2M^3$, where $a_{t}\in L^1(S)$ is given by $a_{t}(s)=a(s-t)$ if $s-t\in S$ and $a_{t}(s)=0$ otherwise. Now, by a construction similar those contained in  \cite{HR}, \cite{NR} and \cite{RW}, there exist 
\begin{enumerate}
\item[(a)] a Banach space $X_A^\infty$, which is contained in the set $\ell^\infty(A;X)$ of $X$-valued nets indexed by $A$ and  contains all nets of the form $(\widehat{c}(T)x_\alpha)$ with $c\in L^1(S)$ and $(x_\alpha)\in \ell^\infty(A;X)$, and a bounded representation $T_A^\infty$ of $S$ on $X_A^\infty$ with $\Sp(T_A^\infty)=\Sp(T)$;

\item[(b)]a Hilbert space $X_A$, a bounded representation $T_A$ of $S$ on $X_A$ with $\Sp(T_A)\subset \Sp(T_A^\infty)$ and a surjective intertwining operator $\pi_A:X_A^\infty\rightarrow X_A$ which is contractive and such that $\|\pi_A(x_\alpha)\|$ is given, for each $(x_\alpha)\in X_A^\infty$, by the limit of the net $(\|x_\alpha\|)$ along some ultrafilter on $A$ which contains the filter generated by the sets $\{\alpha\in A:\alpha\succeq\beta\}$ with $\beta\in A$.
\end{enumerate}

Note, in particular, that $\Sp(T_A)\subset \Sp(T)$. Consider the element $(x_\beta)$ of $X_A^\infty$, where $x_\beta:=\widehat{b}(T)y_\beta$. Then, writing $c:=a*b-a_{t}$, \begin{equation*}\label{long}\begin{aligned}\|T_A(s)\widehat{a}(T_A)\pi_A(x_\beta)\|&=\|\pi_AT_A^\infty(s)\widehat{a}(T_A^\infty)(x_\beta)\|\\&=\big\|\pi_A\big(T(s)\widehat{a*b}(T)y_\beta\big)\big\|\\&\geq\big\|\pi_A\big(T(s+t)\widehat{a}(T)y_\beta\big)\big\|-\big\|\pi_A\big(T(s)\widehat{c}\,(T)y_\beta\big)\big\|\\&\geq\liminf_{\beta\rightarrow\infty}\|T(s+t)\widehat{a}(T)y_\beta\|-M^2\|c\|_1
\end{aligned}\end{equation*}  for all $s\in S$, where the last line follows from the definition of the norm on $X_A$. Thus $\|T_A(s)\widehat{a}(T_A)\pi_A(x_\beta)\|\geq \varepsilon/2M$ for all $s\in S$.

Fix $\chi\in\Spu(T_A)$ and define the operators $Q_{A,\alpha}^\infty\in\mathcal{B}(X_A^\infty)$ and $Q_{A,\alpha}\in\mathcal{B}(X_A)$ as $$Q_{A,\alpha}^\infty:=\frac{1}{\mu(\Omega_\alpha)}\int_{\Omega_\alpha}\overline{\chi(s)}T_A^\infty(s)\widehat{a}(T_A^\infty)\,\mathrm{d}\mu(s)$$ and $$Q_{A,\alpha}:=\frac{1}{\mu(\Omega_\alpha)}\int_{\Omega_\alpha}\overline{\chi(s)}T_A(s)\widehat{a}(T_A)\,\mathrm{d}\mu(s).$$   Then $\pi_A Q_{A,\alpha}^\infty=Q_{A,\alpha}\pi_A$, so  the properties of $\pi_A$ and the fact that $Q_{A,\alpha}^\infty$ acts on $X_A^\infty$ by entrywise application of the operator $Q_\alpha$, as defined in equation~\eqref{Ba}, imply that $\|Q_{A,\alpha}\|\leq\|Q_\alpha\|$. Hence $\|Q_{A,\alpha}\|\rightarrow0$ as $\alpha\rightarrow\infty$, and Proposition~\ref{pwlim} applied to $T_A$ and $X_A$ leads to the required contradiction.
\end{proof}

Corollary~\ref{cor:erg_cond} and Proposition~\ref{lim} together prove the implications (i)$\implies$(ii)$\implies$(iii) of Theorem~\ref{Thm}. The following simple lemma, which follows immediately from the definition of the spectrum of a semigroup representation $T$ along with the observation that $\widehat{a_s}(T)=T(s)\widehat{a}(T)$ for all $a\in L^1(S)$ and $s\in S$, shows that (iii)$\implies$(i), thus completing the proof of the main result.

\begin{mylem}\label{converse2}
Let $T$ be a bounded representation of a semigroup $S$ on a Banach space $X$, and let $a\in L^1(S)$. Then $|\widehat{a}(\chi)|\leq\|T(s)\widehat{a}(T)\|$ for all $\chi\in\Spu(T)$ and all $s\in S$.
\end{mylem}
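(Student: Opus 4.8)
My plan is to use directly the description of the spectrum given in the preliminaries, namely that for a bounded representation $T$ of $S$ on a Banach space $X$,
$$\Sp(T)=\big\{\chi\in S^*:|\widehat{b}(\chi)|\leq\|\widehat{b}(T)\|\;\mbox{for all $b\in L^1(S)$}\big\},$$
so that in particular, if $\chi\in\Spu(T)=\Sp(T)\cap\Gamma$, then $|\widehat{b}(\chi)|\leq\|\widehat{b}(T)\|$ for every $b\in L^1(S)$. The key observation, already noted in the excerpt, is that translation of the $L^1$-function corresponds to applying the representation: for $s\in S$ and $a\in L^1(S)$, the translate $a_s\in L^1(S)$ defined by $a_s(r)=a(r-s)$ when $r-s\in S$ and $a_s(r)=0$ otherwise satisfies $\widehat{a_s}(T)=T(s)\widehat{a}(T)$. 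So the whole statement will follow by applying the spectral inequality to $b:=a_s$.

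First I would verify the identity $\widehat{a_s}(T)=T(s)\widehat{a}(T)$. Writing out the definition, $\widehat{a_s}(T)x=\int_S a_s(r)T(r)x\,\dd\mu(r)=\int_{s+S}a(r-s)T(r)x\,\dd\mu(r)$, and substituting $r=s+t$ together with translation-invariance of Haar measure on $G$ (restricted to $S$) gives $\int_S a(t)T(s+t)x\,\dd\mu(t)=T(s)\int_S a(t)T(t)x\,\dd\mu(t)=T(s)\widehat{a}(T)x$, using that $T$ is a homomorphism and $T(s)$ is bounded (so it passes through the Bochner integral). I should also check $a_s\in L^1(S)$, which is immediate: $\|a_s\|_1=\int_S|a_s(r)|\,\dd\mu(r)=\int_{s+S}|a(r-s)|\,\dd\mu(r)=\|a\|_1<\infty$, again by translation-invariance.

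Next, taking the Fourier transform of $a_s$, for any $\chi\in\Gamma$ (viewed as an element of $S^*$) the same change of variables yields $\widehat{a_s}(\chi)=\int_S a_s(r)\chi(r)\,\dd\mu(r)=\int_S a(t)\chi(s+t)\,\dd\mu(t)=\chi(s)\widehat{a}(\chi)$, since $\chi$ is a character. Finally, fix $\chi\in\Spu(T)$ and $s\in S$. Applying the spectral inequality to $b=a_s$ gives $|\widehat{a_s}(\chi)|\leq\|\widehat{a_s}(T)\|$, that is $|\chi(s)|\,|\widehat{a}(\chi)|\leq\|T(s)\widehat{a}(T)\|$. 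Since $\chi\in\Gamma$ takes values in $\mathbb{T}$, we have $|\chi(s)|=1$, and hence $|\widehat{a}(\chi)|\leq\|T(s)\widehat{a}(T)\|$, as claimed. There is no real obstacle here; the only point requiring minor care is the bookkeeping with the restriction of Haar measure to $S$ and the fact that $r-s\in S$ precisely on the translate $s+S$, but this is routine given the standing assumptions on $S$ and $G$.
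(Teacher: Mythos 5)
Your proposal is correct and is exactly the argument the paper intends: the lemma is stated there as following immediately from the definition of $\Sp(T)$ for semigroup representations together with the identity $\widehat{a_s}(T)=T(s)\widehat{a}(T)$, which is precisely the inequality $|\widehat{a_s}(\chi)|\leq\|\widehat{a_s}(T)\|$ applied with $\widehat{a_s}(\chi)=\chi(s)\widehat{a}(\chi)$ and $|\chi(s)|=1$ for $\chi\in\Gamma$. Your write-up merely supplies the routine change-of-variables verifications that the paper leaves implicit.
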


\begin{rem}
There is a direct proof of the implication (iii)$\implies$(ii) in Theorem~\ref{Thm}. Indeed, if  $T$ is a bounded representation of a semigroup $S$ on any Banach space $X$, if $a\in L^1(S)$  and if $(\Omega_\alpha)$ is any F\o lner net for $S$, then $$\|Q_\alpha\|\leq \sup\{\|T(s)\widehat{a}(T)\|:s\succeq t\}+ M^2\|a\|_1\frac{\mu\big(\Omega_\alpha\bigtriangleup(\Omega_\alpha+t)\big)}{\mu(\Omega_\alpha)}$$ for any $t\in S$, where $Q_\alpha$ is as in \eqref{Ba} and  $M=\sup\{\|T(s)\|:s\in S\}$. Hence (iii)$\implies$(ii)  by definition of a F\o lner net.  Moreover, it is possible, at least in special cases, to show directly that (ii)$\implies$(i). When $S=\mathbb{Z}_+$, this follows from  Corollary~\ref{cor:erg_cond}  and the uniform ergodic theorem (see \cite[Corollary~2.3]{Leka}), and a similar argument works when $S=\mathbb{R}_+$.  
\end{rem}

If one is interested in establishing only the equivalence of statements (i) and (iii) of Theorem~\ref{Thm}, there is a shorter argument which may be of independent interest. Recall the classical fact that, given a representation $T$ of a group $G$ by isometries on a Banach space $X$, one has $\widehat{a}(T)=0$ for all $a\in L^1(G)$ which are of spectral synthesis with respect to $\Sp(T)$. This is a simple consequence of the definition of the spectrum (see also \cite[Chapter~8]{Dav},  \cite[Chapter~5]{Lyu} and \cite[Lemma~2.4.3]{vNBirk}) and is used (together with constructions analogous to those used in the proof of Proposition~\ref{pwlim}) in \cite[Theorem~4.3]{BV} to derive a general form of the Katznelson-Tzafriri theorem on Banach space. Corollary~\ref{synth0} below, which is an improved version of this result when $X$ is a Hilbert space, makes it possible to obtain the implication (i)$\implies$(iii) of  Theorem~\ref{Thm} by an analogous argument which bypasses Proposition~\ref{zero}. It is a special case of the following more general statement.

\begin{myprp}\label{synth}
Let $T$ be a representation of a group $G$ by unitary operators on a Hilbert space $X$, and let $a\in L^1(G)$. Then $\|\widehat{a}(T)\|=\sup\{|\widehat{a}(\chi)|:\chi\in\Sp(T)\}.$
\end{myprp}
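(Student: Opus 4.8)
The plan is to prove the two inequalities separately, the easy one first. The inequality $\sup\{|\widehat{a}(\chi)|:\chi\in\Sp(T)\}\le\|\widehat{a}(T)\|$ is immediate from the very definition of the spectrum: for $\chi\in\Sp(T)$ one has $|\widehat{b}(\chi)|\le\|\widehat{b}(T)\|$ for all $b\in L^1(G)$, so in particular this holds for $b=a$. (This is just Lemma~\ref{converse2} in the group, unitary case, and works on any Banach space.) So the content is the reverse inequality $\|\widehat{a}(T)\|\le\sup\{|\widehat{a}(\chi)|:\chi\in\Sp(T)\}$.

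For that, I would use the spectral theorem for the unitary representation $T$, exactly as in the proof of Proposition~\ref{zero}. Let $E:B(\Gamma)\to\mathcal B(X)$ be the associated projection-valued measure, so that $T(s)=\int_\Gamma\chi(s)\,\dd E(\chi)$ weakly, and hence by Fubini $\widehat{a}(T)=\int_\Gamma\widehat{a}(\chi)\,\dd E(\chi)$ for $a\in L^1(G)$ (equation~\eqref{spectral} with $\Lambda=\Gamma$ and $T_\Gamma=T$). From this functional-calculus representation it follows that $\|\widehat{a}(T)\|$ equals the $E$-essential supremum of the function $\chi\mapsto|\widehat{a}(\chi)|$ on $\Gamma$. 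Since $\widehat{a}\in C_0(\Gamma)$ is continuous, this essential supremum is actually attained on the topological support of $E$, i.e.\ $\|\widehat{a}(T)\|=\sup\{|\widehat{a}(\chi)|:\chi\in\supp E\}$. It therefore remains to identify $\supp E$ with $\Sp(T)$.

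The identification $\supp E=\Sp(T)$ is the step I expect to require the most care, though it is essentially standard. One inclusion: if $\chi_0\notin\Sp(T)$ then, by definition of $\Sp(T)$ as an intersection of closed sets $\Lambda$ with $M_T(\Lambda)=X$, there is a closed $\Lambda\not\ni\chi_0$ with $M_T(\Lambda)=X$; choosing $b\in J_\Lambda(G)$ with $\widehat{b}$ not vanishing near $\chi_0$ (possible since $\chi_0$ has a neighbourhood disjoint from $\Lambda$, and by regularity of $L^1(G)$), we get $\widehat b(T)=0$, hence $\int_\Gamma|\widehat b(\chi)|^2\,\dd\langle E(\chi)x,x\rangle=0$ for all $x$, forcing $E$ to vanish on the open set $\{\widehat b\ne0\}\ni\chi_0$; so $\chi_0\notin\supp E$. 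Conversely, if $\chi_0\notin\supp E$, pick an open neighbourhood $V$ of $\chi_0$ with $E(V)=0$; then for every $b\in L^1(G)$ supported (in the Fourier sense) outside $\Gamma\setminus V$—more precisely for $b\in J_{\Gamma\setminus V}(G)$—one has $\widehat b(T)=\int_{\Gamma\setminus V}\widehat b(\chi)\,\dd E(\chi)=0$, whence $M_T(\Gamma\setminus V)=X$ and so $\Sp(T)\subset\Gamma\setminus V$, giving $\chi_0\notin\Sp(T)$. Alternatively, one can simply invoke that for unitary representations the Arveson spectrum coincides with $\supp E$ (a well-known fact; see e.g.\ \cite[Chapter~8]{Ped}), and cite it. Combining the three steps—the trivial inequality, the functional-calculus norm formula, and $\supp E=\Sp(T)$—yields the claimed equality.
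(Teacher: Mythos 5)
Your proof is correct, but it takes a genuinely different route from the one in the paper. The paper's own proof is purely $C^*$-algebraic: it takes the norm closure $\mathcal{A}_T$ of $\{\widehat{b}(T):b\in L^1(G)\}$, which is a commutative $C^*$-algebra because $T$ is unitary, and invokes \cite[Proposition~2.4]{BV} to identify the character space $\Delta(\mathcal{A}_T)$ with $\Sp(T)$ via $\xi_\chi(\widehat{b}(T))=\widehat{b}(\chi)$; the equality of norms is then immediate from the isometry of the Gelfand transform, and both inequalities come out in one stroke. Your argument instead goes through the spectral measure $E$, the formula $\widehat{a}(T)=\int_\Gamma\widehat{a}(\chi)\,\dd E(\chi)$, the essential-supremum norm identity, and the identification $\supp E=\Sp(T)$ --- this is precisely the alternative the author flags in the remark immediately following the proposition (``this result can also be proved using \eqref{spectral} with $\Lambda=\Sp(T)$''), and your two-inclusion verification of $\supp E=\Sp(T)$ is sound (the forward inclusion does rely on regularity of the Fourier algebra to produce $b\in J_\Lambda$ with $\widehat{b}(\chi_0)\neq0$, which you correctly note). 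What each approach buys: the Gelfand route is shorter and delegates the substantive identification to the cited bijection in \cite{BV}, while your spectral-measure route is more concrete, reuses the machinery already set up in the proof of Proposition~\ref{zero}, and makes the easy inequality (your first paragraph, which is just Lemma~\ref{converse2}) genuinely redundant rather than folded into an isometry statement.
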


\begin{proof}[\textsc{Proof}]Let $\mathcal{A}_T$  denote the norm closure in $\mathcal{B}(X)$ of $\{\widehat{b}(T):g\in L^1(G)\}$. Then $\mathcal{A}_T$ is a commutative $C^*$-algebra and hence, writing $\Delta(\mathcal{A}_T)$ for the character space of $\mathcal{A}_T$, the Gelfand transform $\Phi:\mathcal{A}_T\rightarrow C_0(\Delta(\mathcal{A}_T))$ is an isometric $*$-isomorphism. By \cite[Proposition~2.4]{BV}, the map sending a character $\chi\in\Sp(T)$ to the character $\xi_\chi$ on $\mathcal{A}_T$ defined, on the dense subspace $\{\widehat{b}(T):g\in L^1(G)\}$, by $\xi_\chi(\widehat{b}(T)):=\widehat{b}(\chi)$ is a bijection, and hence $\|\widehat{a}(T)\|=\|\Phi(\widehat{a}(T))\|_\infty=\sup\{|\widehat{a}(\chi)\big|:\chi\in\Sp(T)\}$.
\end{proof}

\begin{rem}
This result  can also be proved using \eqref{spectral} with $\Lambda=\Sp(T)$.
 \end{rem}

\begin{mycor}\label{synth0}
Let $T$ be a representation of a group $G$ by unitary operators on a Hilbert space $X$ with spectrum $\Lambda:=\Sp(T)$, and let $a\in L^1(G)$. Then  $\widehat{a}(T)=0$  if and only if $a\in K_\Lambda$.
\end{mycor}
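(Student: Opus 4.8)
The plan is to deduce this immediately from Proposition~\ref{synth}. The statement to prove is that $\widehat{a}(T)=0$ if and only if $a\in K_\Lambda$, i.e.\ if and only if $\widehat{a}(\chi)=0$ for every $\chi\in\Lambda=\Sp(T)$. But Proposition~\ref{synth} gives the exact identity
\[
\|\widehat{a}(T)\|=\sup\{|\widehat{a}(\chi)|:\chi\in\Sp(T)\},
\]
so the left-hand side vanishes precisely when the supremum on the right vanishes, which is exactly the condition $\widehat{a}(\chi)=0$ for all $\chi\in\Sp(T)$, i.e.\ $a\in K_\Lambda$. Thus the corollary is a one-line consequence.

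Since there is essentially nothing left to do, I would just spell out this equivalence explicitly: first note that $K_\Lambda$ (defined in Section~2 as $\{a\in L^1(G):\widehat{a}(\chi)=0$ for all $\chi\in\Lambda\}$) is by definition the zero set of the map $a\mapsto\widehat{a}|_\Lambda$; then invoke Proposition~\ref{synth} to identify $\|\widehat{a}(T)\|$ with $\|\widehat{a}|_\Lambda\|_\infty$; and conclude. One might optionally remark that the forward implication ($\widehat{a}(T)=0\Rightarrow a\in K_\Lambda$) is elementary and does not need the full strength of Proposition~\ref{synth}, being immediate from the description $\Sp(T)=\{\chi:|\widehat{a}(\chi)|\le\|\widehat{a}(T)\|\ \text{for all}\ a\}$ given in Section~2; it is the reverse implication (spectral synthesis: vanishing on $\Sp(T)$ forces $\widehat{a}(T)=0$) that is the substantive content and is supplied by Proposition~\ref{synth}.

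There is no real obstacle here, as the work has been done in Proposition~\ref{synth}; the only thing to be careful about is the bookkeeping of which space $a$ lives in. The corollary is stated for $a\in L^1(G)$ with $G$ a group, which matches the hypothesis of Proposition~\ref{synth} directly, so no identification of $L^1(S)$ with a subset of $L^1(G)$ is needed at this stage. I would therefore write:

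\begin{proof}[\textsc{Proof}]
By definition, $a\in K_\Lambda$ if and only if $\widehat{a}(\chi)=0$ for every $\chi\in\Lambda=\Sp(T)$, which is to say that $\sup\{|\widehat{a}(\chi)|:\chi\in\Sp(T)\}=0$. By Proposition~\ref{synth}, this supremum equals $\|\widehat{a}(T)\|$, and the result follows.
\end{proof}
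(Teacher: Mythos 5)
Your proof is correct and is exactly the argument the paper intends: Corollary~\ref{synth0} is presented there as an immediate special case of Proposition~\ref{synth}, with no further proof given, and your one-line deduction (the norm identity forces $\|\widehat{a}(T)\|=0$ precisely when $\widehat{a}$ vanishes on $\Sp(T)$, i.e.\ $a\in K_\Lambda$) is the intended reasoning. Nothing is missing.
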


\begin{rem}
This follows also from Corollary~\ref{cor:erg_cond} and Proposition~\ref{zero}  together with Lemma~\ref{converse2}. 
\end{rem}

\section{Quantified results for contractive representations}

The purpose of this section is to study the limit of $\|T(s)\widehat{a}(T)\|$ as $s\rightarrow\infty$ in the case where $T$ is a contractive representation of a semigroup $S$ on a Hilbert space $X$ and $a$ is an element of $L^1(S)$ whose Fourier transform $\widehat{a}$ does not necessarily vanish on the unitary spectrum $\Spu(T)$ of $T$.  Theorem~\ref{ub} below constitutes an important step towards this aim and can be viewed as a sharper version of \cite[Proposition~5.5]{BBG} which holds on general Banach space. It  follows from the following result for individual orbits.

\begin{myprp}\label{pwub}
Let $T$ be a representation of  a semigroup $S$ by contractions on a Hilbert space $X$ with unitary spectrum $\Lambda:=\Spu(T)$, and let $a\in L^1(S)$. Then$$\lim_{s\rightarrow\infty}\|T(s)\widehat{a}(T)x\|\leq \|a+K_\Lambda\|\|x\|$$for all $x\in X$.
\end{myprp}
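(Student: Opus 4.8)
\textit{Proof proposal.} The plan is to transport the problem, via the dilation constructions already used in the proof of Proposition~\ref{pwlim}, to a \emph{unitary} representation of the group $G$, where Proposition~\ref{synth} provides an exact formula for the norm of the relevant operator. Fix $x\in X$. Since $T$ is contractive, the net $s\mapsto\|T(s)\widehat{a}(T)x\|$ is non-increasing along $(S,\succeq)$, so the limit on the left-hand side exists and equals $L:=\inf_{s\in S}\|T(s)\widehat{a}(T)x\|$; the task is then to prove $L\leq\|a+K_\Lambda\|\,\|x\|$.

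First I would fix a Banach limit $\phi$ on $L^\infty(S)$. As in the proof of Proposition~\ref{pwlim}, this yields a Hilbert space $X_\phi$, a representation $T_\phi$ of $G$ by unitary operators on $X_\phi$ with $\Sp(T_\phi)\subset\Lambda$, and a contractive intertwining operator $\pi_\phi:X\to X_\phi$ with $\|\pi_\phi y\|^2=\phi(\|T(\cdot)y\|^2)$ for all $y\in X$; applying $\pi_\phi$ under the integral sign gives $\pi_\phi\widehat{a}(T)=\widehat{a}(T_\phi)\pi_\phi$, where $a$ is regarded as an element of $L^1(G)$ by extension by zero. The crucial step---and the one I expect to be the main obstacle---is to show that $\|\pi_\phi\widehat{a}(T)x\|$ equals $L$ exactly, rather than merely bounding it: given $\varepsilon>0$, choose $s_0\in S$ with $\|T(s_0)\widehat{a}(T)x\|^2<L^2+\varepsilon$; then, since $s_0+s\succeq s_0$ for every $s\in S$ and $T$ is contractive, the bounded function $s\mapsto\|T(s_0+s)\widehat{a}(T)x\|^2$ takes values in $[L^2,L^2+\varepsilon)$, so positivity, normalisation and translation-invariance of $\phi$ force $L^2\leq\phi(\|T(\cdot)\widehat{a}(T)x\|^2)\leq L^2+\varepsilon$, hence $\|\pi_\phi\widehat{a}(T)x\|^2=\phi(\|T(\cdot)\widehat{a}(T)x\|^2)=L^2$. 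This is precisely the point at which contractivity of $T$, as opposed to mere boundedness, is used.

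Combining the two identities above with $\|\pi_\phi x\|\leq\|x\|$ gives $L=\|\widehat{a}(T_\phi)\pi_\phi x\|\leq\|\widehat{a}(T_\phi)\|\,\|x\|$. Since $T_\phi$ is a unitary representation of the group $G$ with $\Sp(T_\phi)\subset\Lambda$, Proposition~\ref{synth} yields $\|\widehat{a}(T_\phi)\|=\sup\{|\widehat{a}(\chi)|:\chi\in\Sp(T_\phi)\}\leq\sup\{|\widehat{a}(\chi)|:\chi\in\Lambda\}$; and for any $b\in K_\Lambda$ and $\chi\in\Lambda$ one has $|\widehat{a}(\chi)|=|\widehat{a+b}(\chi)|\leq\|a+b\|_1$, so taking the supremum over $\chi$ and the infimum over $b$ gives $\sup_{\chi\in\Lambda}|\widehat{a}(\chi)|\leq\|a+K_\Lambda\|$. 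Putting the pieces together gives $L\leq\|a+K_\Lambda\|\,\|x\|$, as required. The remaining points are routine: the compatibility of the Fourier transforms on $S$ and on $G$ for functions supported in $S$, and the observation that the dilation in the proof of Proposition~\ref{pwlim} may be arranged so that $T_\phi$ itself acts by unitary operators on $X_\phi$, at the harmless cost of dropping the density of $\Ran\pi_\phi$, which is not needed here.
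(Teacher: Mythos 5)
Your argument is correct and follows essentially the same route as the paper: pass to the unitary dilation $T_\phi$ of $G$ via a Banach limit, use contractivity to identify $\|\pi_\phi y\|$ with $\lim_{s\rightarrow\infty}\|T(s)y\|$, and then bound $\|\widehat{a}(T_\phi)\|$ by the quotient norm $\|a+K_\Lambda\|$. The only cosmetic difference is in the last step, where you invoke Proposition~\ref{synth} together with the estimate $\sup\{|\widehat{a}(\chi)|:\chi\in\Lambda\}\leq\|a+K_\Lambda\|$, while the paper uses Corollary~\ref{synth0} to conclude $\|\widehat{a}(T_\phi)\|=\|\widehat{a+b}(T_\phi)\|\leq\|a+b\|_1$ for every $b\in K_\Lambda$ directly.
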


\begin{proof}[\textsc{Proof}]
Fix any Banach limit $\phi$ on $L^\infty(S)$ and let $X_\phi$, $T_\phi$ and $\pi_\phi$ be as in the proof of Proposition~\ref{pwlim}. Then, since $T$ is contractive, $\|\pi_\phi(x)\|=\lim_{s\rightarrow\infty}\|T(s)x\|$ for all $x\in X$ and it is possible, as before, to assume that $T_\phi$ is in fact a representation of the group $G=S-S$ on $X_\phi$ by unitary operators. It follows from Corollary~\ref{synth0} that $\widehat{b}(T_\phi)=0$ for all $b\in K_\Lambda$. Hence $
\|\widehat{a}(T_\phi)\|\leq \|a-b\|_1$
for any such $b$, which implies that $\|\widehat{a}(T_\phi)\|\leq \|a+K_\Lambda(G)\|$. Thus, given any $x\in X$, 
$$\begin{aligned}
\lim_{s\rightarrow\infty}\|T(s)\widehat{a}(T)x\|&=\|\pi_\phi(\widehat{a}(T)x)\|\\&=\|\widehat{a}(T_\phi)\pi_\phi(x)\|\\&\leq \|a+K_\Lambda(G)\| \|\pi_\phi(x)\|,
\end{aligned}$$and the result follows since $\pi_\phi$ is a contraction.
\end{proof}

\begin{mythm}\label{ub}
Let  $T$ be a representation of $S$ by contractions on a Hilbert space $X$ with unitary spectrum $\Lambda:=\Spu(T)$, and let $a\in L^1(S)$. Then $$\lim_{s\rightarrow\infty}\|T(s)\widehat{a}(T)\|\leq \|a+K_\Lambda\|.$$ 
\end{mythm}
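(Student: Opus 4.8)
The plan is to upgrade the individual-orbit bound of Proposition~\ref{pwub} to a norm bound by passing to the same kind of ``net of orbits'' ultrapower construction that was used in the proof of Proposition~\ref{lim}. Suppose for contradiction that the conclusion fails, so there exist $\varepsilon>0$ and a net $(s_\beta)_{\beta\in A}$ in $S$ with $s_\beta\to\infty$, together with unit vectors $(y_\beta)$, such that $\|T(s_\beta)\widehat{a}(T)y_\beta\|\geq\|a+K_\Lambda\|+\varepsilon$ for all $\beta$. Since $T$ is contractive, $\|T(s)\widehat{a}(T)y_\beta\|\geq\|a+K_\Lambda\|+\varepsilon$ whenever $s_\beta-s\in S$, i.e.\ eventually along the directed set $A$ for each fixed $s$.

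Next I would invoke the construction (a)--(b) from the proof of Proposition~\ref{lim}: there is a Banach space $X_A^\infty\subset\ell^\infty(A;X)$ containing all nets of the form $(\widehat{c}(T)x_\alpha)$, carrying a bounded representation $T_A^\infty$ with $\Sp(T_A^\infty)=\Sp(T)$, and a contractive surjective intertwining operator $\pi_A:X_A^\infty\to X_A$ onto a Hilbert space $X_A$ with $\Sp(T_A)\subset\Sp(T_A^\infty)$, where $\|\pi_A(x_\alpha)\|$ is an ultrafilter limit of $(\|x_\alpha\|)$ along an ultrafilter refining the tail filter on $A$. Crucially, since $T$ is contractive and $\pi_A$ is built from entrywise $T$-action followed by a contractive map, $T_A$ is again a contractive representation, and $\Spu(T_A)\subset\Spu(T)=\Lambda$, so $K_\Lambda\subset K_{\Spu(T_A)}$ and hence $\|a+K_{\Spu(T_A)}\|\leq\|a+K_\Lambda\|$. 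Applying the intertwining property to the element $(y_\beta)\in X_A^\infty$ and using that along the chosen ultrafilter $\|T(s)\widehat{a}(T)y_\beta\|\geq\|a+K_\Lambda\|+\varepsilon$ holds eventually, I get a unit vector $x:=\pi_A((y_\beta))\in X_A$ with $\|T_A(s)\widehat{a}(T_A)x\|\geq\|a+K_\Lambda\|+\varepsilon$ for \emph{all} $s\in S$.

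But now Proposition~\ref{pwub} applied to the contractive representation $T_A$ on the Hilbert space $X_A$ gives
$$\lim_{s\to\infty}\|T_A(s)\widehat{a}(T_A)x\|\leq\|a+K_{\Spu(T_A)}\|\,\|x\|\leq\|a+K_\Lambda\|,$$
which contradicts the lower bound just obtained. Hence no such $\varepsilon$ and nets exist, and $\lim_{s\to\infty}\|T(s)\widehat{a}(T)\|\leq\|a+K_\Lambda\|$.

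The step I expect to require the most care is verifying that the construction (a)--(b), quoted from the proof of Proposition~\ref{lim}, genuinely preserves contractivity and that the spectral inclusion $\Spu(T_A)\subset\Lambda$ goes the right way — since Proposition~\ref{pwub} is only available for contractive representations, and its bound is phrased in terms of $K_{\Spu(T_A)}$, I need $\Spu(T_A)\subset\Lambda$ rather than the reverse in order to conclude $\|a+K_{\Spu(T_A)}\|\leq\|a+K_\Lambda\|$. The inclusion $\Sp(T_A)\subset\Sp(T_A^\infty)=\Sp(T)$ is exactly what is recorded in (a)--(b), so intersecting with $\Gamma$ gives what is needed; the contractivity is immediate from the entrywise action and the contractivity of $\pi_A$. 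Everything else is a routine translation of the $L^1(S)\hookrightarrow L^1(G)$ bookkeeping already used earlier in the paper.
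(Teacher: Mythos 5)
Your overall strategy---contradiction, passage to the ultrapower-type representation $T_A$ on $X_A$, and an appeal to Proposition~\ref{pwub} together with the inclusion $\Spu(T_A)\subset\Lambda$---is exactly the paper's, and your care over the direction of the spectral inclusion and the contractivity of $T_A$ is well placed. But there is one genuine gap: you apply $\pi_A$ directly to the net $(y_\beta)$, asserting $(y_\beta)\in X_A^\infty$. That membership is not guaranteed. As recorded in item (a) of the construction, $X_A^\infty$ is only \emph{contained in} $\ell^\infty(A;X)$ and is only guaranteed to contain nets of the form $(\widehat{c}(T)x_\alpha)$ with $c\in L^1(S)$; it is the continuity subspace on which the entrywise action of $T$ is strongly continuous in the sup norm, and an arbitrary bounded net of unit vectors need not lie in it. So the step ``applying the intertwining property to the element $(y_\beta)\in X_A^\infty$'' can fail before it starts.

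This is precisely why the paper's proof mollifies: it fixes $t\in S^\circ$, chooses $b\in L^1(S)$ with $\|b\|_1=1$ and $\|a*b-a_t\|_1<\varepsilon/2$, and works with $x_\beta:=\widehat{b}(T)y_\beta$, which \emph{is} of the admissible form. The price is that one no longer gets the lower bound for free; one must run the estimate (with $c:=a*b-a_t$ and $M=1$ since $T$ is contractive)
\begin{equation*}
\|T_A(s)\widehat{a}(T_A)\pi_A(x_\beta)\|\;\geq\;\liminf_{\beta\rightarrow\infty}\|T(s+t)\widehat{a}(T)y_\beta\|-\|c\|_1\;>\;\|a+K_\Lambda\|+\varepsilon/2,
\end{equation*}
which still suffices to contradict Proposition~\ref{pwub} together with $\|\pi_A(x_\beta)\|\leq\|\widehat{b}(T)\|\leq1$. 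Once you insert this mollification step, your argument coincides with the paper's proof.
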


\begin{proof}[\textsc{Proof}]
Suppose not. Then there exist $\varepsilon>0$ and a net $(s_\beta)$ in $S$, with indexing set $A$, say, and satisfying $s_\beta\rightarrow\infty$ as $\beta\rightarrow\infty$, as well as a net of unit vectors $(y_\beta)$ in $X$ such that $\|T(s_\beta)\widehat{a}(T)y_\beta\|\geq \|a+K_\Lambda\|+\varepsilon$ for all $\beta\in A$. In particular, $\|T(s)\widehat{a}(T)y_\beta\|\geq \|a+K_\Lambda\|+\varepsilon$ whenever $s_\beta-s\in S$.

 Let $X_A$,  $X_A^\infty$ , $T_A$ and  $\pi_A$ be as in the proof of  Proposition~\ref{lim}, choose, for a fixed  $t\in S^\circ$,  $b\in L^1(S)$  such that $\|b\|_1=1$ and $\|a*b-a_{t}\|_1<\varepsilon/2$, and again define $(x_\beta)\in X_A^\infty$ by setting $x_\beta:=\widehat{b}(T)y_\beta$. It then follows from a calculation analogous to the one in the proof of Proposition~\ref{lim} that $\|T_A(s)\widehat{a}(T_A)\pi_A(x_\beta)\| > \|a+K_\Lambda\|+\varepsilon/2$ for all $s\in S$. However, applying  Proposition~\ref{pwub} to the contractive representation $T_A$ of $S$ on $X_A$, and using the fact that $\|\pi_A(x_\beta)\|\leq \|\widehat{b}(T)\|\leq1,$ leads to $$\begin{aligned}\lim_{s\rightarrow\infty}\|T_A(s)\widehat{a}(T_A)\pi_A(x_\beta)\|&\leq  \|a+K_{\Lambda_A}\| \|\pi_A(x_\beta)\|\leq \|a+K_{\Lambda}\|,\end{aligned}$$ where $\Lambda_A:=\Spu(T_A)\subset \Lambda$. This gives the required contradiction.
\end{proof}

The final  result is a special instance Theorem~\ref{ub} which applies to  representations whose unitary spectrum is  a Helson set. It is a simple consequence of the definition of a Helson set and should be compared with the results in \cite[Section~5]{Zarr}, which hold on Banach space but in addition assume the unitary spectrum to be of spectral synthesis. 

\begin{mycor}\label{Helub}
Let $T$ be a representation of a semigroup $S$ by contractions on a Hilbert space $X$, and let  $a\in L^1(S)$. Suppose that the unitary spectrum  $\Lambda:=\Spu(T)$ of $T$  is a Helson set. Then \begin{equation*}\label{UB} \sup\{|\widehat{a}(\chi)|:\chi\in \Lambda\}\leq \lim_{s\rightarrow\infty}\|T(s)\widehat{a}(T)\|\leq \alpha(\Lambda) \sup\{|\widehat{a}(\chi)|:\chi\in \Lambda\}.\end{equation*} 
\end{mycor}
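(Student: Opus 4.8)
The plan is to obtain both inequalities from results already established in the paper, combined with the basic definition of a Helson set and its constant $\alpha(\Lambda)$. The lower bound is the easy half: it is immediate from Lemma~\ref{converse2}, which gives $|\widehat a(\chi)|\le\|T(s)\widehat a(T)\|$ for every $\chi\in\Spu(T)=\Lambda$ and every $s\in S$; taking the supremum over $\chi\in\Lambda$ and then letting $s\to\infty$ yields $\sup\{|\widehat a(\chi)|:\chi\in\Lambda\}\le\lim_{s\to\infty}\|T(s)\widehat a(T)\|$. (One should note in passing that this limit exists because $\|T(s)\widehat a(T)\|$ is non-increasing along the directed set $S$, since $T$ is contractive and $T(s+t)\widehat a(T)=T(t)T(s)\widehat a(T)$; alternatively one may simply read ``$\lim$'' as the appropriate limit inferior/superior as elsewhere in the section.)

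For the upper bound I would invoke Theorem~\ref{ub}, which already gives $\lim_{s\to\infty}\|T(s)\widehat a(T)\|\le\|a+K_\Lambda\|$. So the whole task reduces to showing that $\|a+K_\Lambda\|\le\alpha(\Lambda)\sup\{|\widehat a(\chi)|:\chi\in\Lambda\}$. This is precisely where the Helson-set hypothesis enters. Recall that $a$ is identified with an element of $L^1(G)$ (as in the proof of Proposition~\ref{pwub}), and $\|a+K_\Lambda\|$ is its norm in the quotient $L^1(G)/K_\Lambda(G)$. By definition of a Helson set, the induced homomorphism $U_\Lambda:L^1(G)/K_\Lambda(G)\to C_0(\Lambda)$ is an isomorphism with $\|U_\Lambda^{-1}\|=\alpha(\Lambda)$, and $U_\Lambda(a+K_\Lambda)=\widehat a|_\Lambda$. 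Hence $\|a+K_\Lambda\|=\|U_\Lambda^{-1}(\widehat a|_\Lambda)\|\le\alpha(\Lambda)\|\widehat a|_\Lambda\|_\infty=\alpha(\Lambda)\sup\{|\widehat a(\chi)|:\chi\in\Lambda\}$. Chaining this with Theorem~\ref{ub} gives the right-hand inequality and completes the proof.

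I do not anticipate a genuine obstacle here: the corollary is essentially a dictionary translation of Theorem~\ref{ub} through the definition of $\alpha(\Lambda)$. The only points requiring a word of care are (a) confirming that the identification of $a\in L^1(S)$ with an element of $L^1(G)$ is consistent with the one used to define $K_\Lambda$ and $U_\Lambda$ (this is the same identification already used implicitly in Theorem~\ref{ub} and Proposition~\ref{pwub}), and (b) making sure the limit on the left is interpreted the same way in the lower and upper estimates. Neither of these is a real difficulty, so the proof will be short.

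\begin{proof}[\textsc{Proof}]
By Lemma~\ref{converse2}, $|\widehat a(\chi)|\le\|T(s)\widehat a(T)\|$ for every $\chi\in\Lambda=\Spu(T)$ and every $s\in S$; taking the supremum over $\chi\in\Lambda$ and letting $s\to\infty$ yields the first inequality. For the second, identify $a$ with an element of $L^1(G)$ as in the proof of Proposition~\ref{pwub}, so that $a+K_\Lambda$ may be regarded as an element of $L^1(G)/K_\Lambda(G)$ with $U_\Lambda(a+K_\Lambda)=\widehat a|_\Lambda$. Since $\Lambda$ is a Helson set, $U_\Lambda$ is invertible with $\|U_\Lambda^{-1}\|=\alpha(\Lambda)$, so
$$\|a+K_\Lambda\|=\big\|U_\Lambda^{-1}\big(\widehat a|_\Lambda\big)\big\|\leq\alpha(\Lambda)\big\|\widehat a|_\Lambda\big\|_\infty=\alpha(\Lambda)\sup\{|\widehat a(\chi)|:\chi\in\Lambda\}.$$
Combining this with Theorem~\ref{ub} gives $\lim_{s\to\infty}\|T(s)\widehat a(T)\|\le\|a+K_\Lambda\|\le\alpha(\Lambda)\sup\{|\widehat a(\chi)|:\chi\in\Lambda\}$, as required.
\end{proof}
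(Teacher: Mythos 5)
Your proof is correct and follows exactly the route the paper intends: the lower bound from Lemma~\ref{converse2} and the upper bound by combining Theorem~\ref{ub} with the estimate $\|a+K_\Lambda\|=\|U_\Lambda^{-1}(\widehat a|_\Lambda)\|\leq\alpha(\Lambda)\sup\{|\widehat a(\chi)|:\chi\in\Lambda\}$ coming from the definition of a Helson set and its constant. The paper leaves this as ``a simple consequence'' and you have supplied precisely the intended details, including the sensible remark about why the limit exists for contractive representations.
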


\begin{rem}
The first inequality holds irrespective of whether the unitary spectrum is a Helson set, and indeed of whether $X$ is a Hilbert space. It is an immediate consequence of Lemma~\ref{converse2}.
\end{rem}

\section{Local results}

This final section gives a brief account of how some of the aforementioned improvements of the Katznelson-Tzafriri theorem on Hilbert space carry over to orbitwise, or `local', versions of the result. More specifically, the aim  is to obtain spectral conditions which ensure, given a bounded representation $T$ of a semigroup on a Hilbert space $X$ and an element $a$ of $L^1(S)$, that $\|T(s)\widehat{a}(T)x\|\rightarrow0$ as $s\rightarrow\infty$ for a \textsl{particular} point $x\in X$. Such results are of particular interest in the context of $C_0$-semigroups, where orbits correspond to solutions of the associated Cauchy problem, and they have been studied for instance in \cite{BvNR}, \cite[Section~4]{BY2} and \cite{KvN}.

The  notion of \textsl{spectrum} that is most appropriate to this context goes back to \cite{Alb}. Consider a bounded representation $T$ of a semigroup $S$ on some Banach space $X$, and let $x\in X$ be given.  A character $\chi\in \Gamma$ will be said to be \textsl{locally regular at $x$} if there exist $n\in\mathbb{N}$, $a_1,\dotsc,a_n\in L^1(S)$, a neighbourhood $\Omega$ of the point $\lambda_0:=(\widehat{a_1}(\chi),\dotsc,\widehat{a_n}(\chi))$ in $\mathbb{C}^n$ and holomorphic functions $g_1,\dotsc,g_n:\Omega\rightarrow X$ such that $\sum_{k=0}^n(\lambda_k-\widehat{a_k}(T))g_k(\lambda)=x$ for all $\lambda=(\lambda_1,\dotsc,\lambda_n)\in\Omega$. The \textsl{unitary local (Albrecht) spectrum} $\Spu(T;x)$ of $T$ at $x$ is then defined  as the set of all characters $\chi\in \Gamma$ which fail to be locally regular at $x$. It is easy to see that $\Spu(T;x)\subset\Spu(T)$ for each $x\in X$. For further details on the unitary local spectrum and its relation to other natural notions of local spectrum, see \cite[Section~4]{BY2}.

The main result of this section is the following theorem, which improves  \cite[Theorem~5.1]{BvNR} in the Hilbert space setting.

\begin{mythm}\label{local_KT}
Let $T$ be a bounded representation of a semigroup $S$ on a Hilbert space $X$. Furthermore, let $x\in X$ and $a\in L^1(S)$. Then $\|T(s)\widehat{a}(T)x\|\rightarrow0$ as $s\rightarrow\infty$ provided $\widehat{a}(\chi)=0$ for all $\chi\in\Spu(T;x)$. 
\end{mythm}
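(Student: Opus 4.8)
The plan is to reduce Theorem~\ref{local_KT} to the global result of Theorem~\ref{Thm} (or, more precisely, to Proposition~\ref{pwlim}) by factoring out the part of the space on which the orbit of $\widehat{a}(T)x$ is already controlled. First I would fix a Banach limit $\phi$ on $L^\infty(S)$ and pass to the Hilbert space $X_\phi$, the isometric representation $T_\phi$, and the intertwining operator $\pi_\phi:X\to X_\phi$ constructed exactly as in the proof of Proposition~\ref{pwlim}, so that $\Ker\pi_\phi=\{y\in X:\|T(s)y\|\to0\}$ and $\pi_\phi T(s)=T_\phi(s)\pi_\phi$. Using the construction analogous to \cite[Proposition~2.1]{BG} one may moreover assume $T_\phi$ is a unitary representation of the group $G=S-S$. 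The goal then becomes to show $\pi_\phi(\widehat{a}(T)x)=0$, i.e.\ $\widehat{a}(T_\phi)\pi_\phi(x)=0$, for then the description of $\Ker\pi_\phi$ finishes the argument.

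The key point is to track what the local spectral hypothesis says after applying $\pi_\phi$. I would show that $\Spu(T_\phi;\pi_\phi(x))\subseteq\Spu(T;x)$: the defining condition for local regularity at $x$ is the solvability of $\sum_{k=0}^n(\lambda_k-\widehat{a_k}(T))g_k(\lambda)=x$ with $g_k$ holomorphic and $X$-valued near $\lambda_0=(\widehat{a_1}(\chi),\dots,\widehat{a_n}(\chi))$; applying the bounded intertwining operator $\pi_\phi$ and using $\pi_\phi\widehat{a_k}(T)=\widehat{a_k}(T_\phi)\pi_\phi$ turns this into a solution of the corresponding equation for $\pi_\phi(x)$ with respect to $T_\phi$, with the same $\lambda_0$ (since $\widehat{a_k}(\chi)$ depends only on $a_k$ and $\chi$, not on the representation). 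Hence every $\chi$ locally regular at $x$ for $T$ is locally regular at $\pi_\phi(x)$ for $T_\phi$, giving the claimed inclusion of local spectra. Consequently $\widehat{a}$ vanishes on $\Spu(T_\phi;\pi_\phi(x))$.

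It now remains to prove the statement for the unitary representation $T_\phi$ of the group $G$ and the fixed vector $y_0:=\pi_\phi(x)$: if $\widehat{a}$ vanishes on $\Spu(T_\phi;y_0)$ then $\widehat{a}(T_\phi)y_0=0$. Here I would use the spectral measure $E:B(\Gamma)\to\mathcal{B}(X_\phi)$ of $T_\phi$, as in the proof of Proposition~\ref{zero}, together with the formula $\widehat{a}(T_\phi)=\int_\Gamma\widehat{a}(\chi)\,\dd E(\chi)$. The quantity $\|\widehat{a}(T_\phi)y_0\|^2=\int_\Gamma|\widehat{a}(\chi)|^2\,\dd\langle E(\chi)y_0,y_0\rangle$ is controlled by the values of $\widehat{a}$ on the closed support of the scalar measure $\langle E(\cdot)y_0,y_0\rangle$, which is precisely the \emph{local spectrum} of $y_0$ in the usual (half-plane/unit-circle) sense; one then checks that this support is contained in $\Spu(T_\phi;y_0)$, so that $\widehat{a}\equiv0$ there forces the integral to vanish. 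The main obstacle I anticipate is exactly this last identification — reconciling the Albrecht-type local spectrum $\Spu(T_\phi;y_0)$ (defined via holomorphic resolvent-type equations) with the support of the spectral measure $\langle E(\cdot)y_0,y_0\rangle$; for a unitary representation this should follow from functional calculus (the functions $\widehat{b}(T_\phi)=\int\widehat{b}\,\dd E$ generate, via Stone--Weierstrass, all of $C_0$ of the support, so local invertibility at $\chi$ is equivalent to $\chi$ lying off the support), but making this precise in the semigroup/group generality of the paper, and checking the direction $\supp\langle E(\cdot)y_0,y_0\rangle\subseteq\Spu(T_\phi;y_0)$ carefully, is where the real work lies. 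Once that is in hand, the combination of the three steps yields $\widehat{a}(T_\phi)\pi_\phi(x)=0$ and hence $\|T(s)\widehat{a}(T)x\|\to0$.
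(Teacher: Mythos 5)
Your proposal follows essentially the same route as the paper: pass via the Banach limit to the isometric/unitary representation $T_\phi$, establish the inclusion $\Spu(T_\phi;\pi_\phi(x))\subset\Spu(T;x)$ by pushing the local regularity equations through the intertwining operator, and then use the fact that for unitary representations the spectral condition alone (with no synthesis hypothesis) forces $\widehat{a}(T_\phi)\pi_\phi(x)=0$. The identification you flag as ``where the real work lies'' --- reconciling the Albrecht local spectrum of $\pi_\phi(x)$ with the support of the scalar spectral measure --- is precisely what the paper outsources to \cite[Theorem~4.5]{BY2}: restricting $T_\phi$ to the cyclic subspace $X_x$ spanned by the orbit of $\pi_\phi(x)$, that theorem gives $\Sp(T_x)=\Spu(T_\phi;\pi_\phi(x))$, after which Corollary~\ref{synth0} applied to $T_x$ yields $\widehat{a}(T_x)=0$ and the argument closes exactly as you describe.
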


\begin{proof}[\textsc{Proof}] 
Fix a Banach limit $\phi$ on $L^\infty(S)$ and let  $X_\phi$, $T_\phi$ and $\pi_\phi$ be as in the proof of Proposition~\ref{pwlim}, so that $T_\phi$ may again be assumed to be a representation of the group $G=S-S$ on $X_\phi$ by unitary operators. By \cite[Proposition~5.1]{BY2}, $\Spu(T_\phi;\pi_\phi(x))\subset\Spu(T;x)$. Moreover, writing $X_x$ for the closed linear span of the set $\{T_\phi(s)\pi_\phi(x):s\in G\}$ in $X_\phi$ and $T_x$ for the representation of $G$ on $X_x$ obtained by restricting $T_\phi$, \cite[Theorem~4.5]{BY2} gives $\Sp(T_x)=\Spu(T_\phi;\pi_\phi(x))$ and hence $\Sp(T_x)\subset\Spu(T;x)$.  Thus the assumption on $a$ implies that $\widehat{a}(\chi)=0$ for all $\chi\in \Sp(T_x)$, from which it follows by Corollary~\ref{synth0} that $\widehat{a}(T_x)=0$. Hence $\pi_\phi(\widehat{a}(T)x)=\widehat{a}(T_\phi)\pi_\phi(x)=0$, which is to say  $\widehat{a}(T)x\in\Ker\pi_\phi$, as required.
\end{proof}

Theorem~\ref{local_KT}  in fact holds even for certain unbounded representations provided the growth of the norm is sufficiently slow and regular. Given a semigroup $S$, a measurable  function $w:S\rightarrow[1,\infty)$ is said to be a \textsl{weight} if it is  bounded on compact subsets of  $S$ and satisfies $w(s+t)\leq w(s)w(t)$ for all $s,t\in S$. Given a  weight $w$ and a representation $T$ of $S$ on a Banach space $X$ which satisfies $\|T(s)\|\leq w(s)$ for all $s\in S$, it is possible, essentially by replacing any occurrence of $L^1(S)$ with the Beurling algebra $L^1_w(S)$, to define the modified  unitary local (Albrecht) spectrum $\Spuw(T;x)$ for any point $x\in X$; see  \cite{BY} and \cite{Lyu} for details.

An argument entirely analogous to the proof of Theorem~\ref{local_KT}, but this time using the full strength of the results in \cite{BY2}, then leads to the following result. Here the additional regularity assumption  on the weight $w$ ensures that the representation corresponding to $T_\phi$ in the above proof is again isometric; see \cite[Proposition~3.1]{BY2}. Similar non-local results may be found in \cite[Theorem~3.4]{BY2} and \cite[Theorem~8]{Vu2}.

\begin{mythm}\label{local_nqa_KT}
Let $T$ be a representation of a semigroup $S$ on a Hilbert space $X$ and suppose that $T$ is dominated by a weight $w$ such that, for every $t\in S$,  $w(s)^{-1}w(s+t)\rightarrow1$ as $s\rightarrow\infty$. Furthermore, let $x\in X$ and suppose that $a\in L_w^1(S)$ is such that $\widehat{a}(\chi)=0$ for all $\chi\in \Spuw(T;x)$. Then $\|T(s)\widehat{a}(T)x\|=o(w(s))$ as $s\rightarrow\infty$.
\end{mythm}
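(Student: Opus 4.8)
The plan is to mimic the proof of Theorem~\ref{local_KT} as closely as possible, working with the Beurling algebra $L^1_w(S)$ in place of $L^1(S)$ throughout, and then to translate the resulting fact about the dilated representation back into a statement about the original orbit. The key observation is that the construction underlying Proposition~\ref{pwlim} (the intertwining operator $\pi_\phi$, the space $X_\phi$ and the representation $T_\phi$) goes through for a representation dominated by a weight $w$, with one crucial difference: the natural quotient norm on $X_\phi$ is no longer $\phi(\|T(\cdot)x\|^2)^{1/2}$ but rather $\phi\big((w(\cdot)^{-1}\|T(\cdot)x\|)^2\big)^{1/2}$, and the hypothesis that $w(s)^{-1}w(s+t)\to1$ as $s\to\infty$ is precisely what guarantees that $T_\phi$ acts \emph{isometrically} on $X_\phi$; this is exactly the content of \cite[Proposition~3.1]{BY2}. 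Having secured that $T_\phi$ is isometric, I would pass (as in the earlier proofs) to a unitary representation of the group $G=S-S$ on $X_\phi$, sacrificing only the density of the range of $\pi_\phi$.

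Next I would invoke the full strength of \cite{BY2}: writing $X_x$ for the closed $T_\phi$-cyclic subspace generated by $\pi_\phi(x)$ and $T_x$ for the restriction of $T_\phi$ to $X_x$, the results of \cite{BY2} give $\Sp(T_x)=\Spuw(T_\phi;\pi_\phi(x))\subset\Spuw(T;x)$, the last inclusion again being supplied by the relevant proposition of \cite{BY2} comparing local spectra under the intertwining operator. The hypothesis on $a$ then forces $\widehat{a}$ to vanish on $\Sp(T_x)$, and since $T_x$ is a unitary group representation on a Hilbert space, Corollary~\ref{synth0} (applied with $L^1_w$ in place of $L^1$, or via the standard fact that membership in the kernel ideal is equivalent to $\widehat a(T_x)=0$ for unitary representations) yields $\widehat{a}(T_x)=0$, hence $\widehat{a}(T_\phi)\pi_\phi(x)=0$, hence $\pi_\phi(\widehat{a}(T)x)=0$.

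Finally I would unwind the definition of $\pi_\phi$ in the weighted setting. The identity $\|\pi_\phi(y)\|^2=\phi\big((w(\cdot)^{-1}\|T(\cdot)y\|)^2\big)$ applied to $y=\widehat{a}(T)x$ shows that the Banach limit of $w(s)^{-2}\|T(s)\widehat{a}(T)x\|^2$ is zero. Since a Banach limit dominates the limit inferior and the quantity is nonnegative, this forces $w(s)^{-1}\|T(s)\widehat{a}(T)x\|\to0$, i.e. $\|T(s)\widehat{a}(T)x\|=o(w(s))$, as claimed. The main obstacle—really the only non-routine point—is verifying that the weighted analogue of the $\pi_\phi$-construction behaves as asserted, in particular that the regularity condition $w(s)^{-1}w(s+t)\to1$ delivers an isometric $T_\phi$; but this is precisely what \cite[Proposition~3.1]{BY2} provides, so the argument is genuinely a routine adaptation of Theorem~\ref{local_KT} once that citation is in hand. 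A secondary technical point is checking that the cyclic-subspace spectral identities of \cite{BY2} are stated (or extend) in the weighted generality needed here, which is why the argument is phrased as using ``the full strength of the results in \cite{BY2}''.
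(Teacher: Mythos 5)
Your proposal follows essentially the same route as the paper's own (deliberately terse) argument: the paper likewise proves this by rerunning the proof of Theorem~\ref{local_KT} in the weighted setting, citing \cite[Proposition~3.1]{BY2} for the fact that the regularity condition $w(s)^{-1}w(s+t)\to1$ makes the limit representation $T_\phi$ isometric, and using the weighted local-spectral results of \cite{BY2} for the inclusion $\Sp(T_x)\subset\Spuw(T;x)$ before applying Corollary~\ref{synth0}. The one imprecision is your final step: for a Banach limit $\phi$ and a nonnegative function $f$, $\phi(f)=0$ gives only $\liminf f=0$ rather than $\lim f=0$, so the conclusion $w(s)^{-1}\|T(s)\widehat{a}(T)x\|\to0$ should instead be drawn from the characterisation of $\Ker\pi_\phi$ supplied by the weighted construction (which upgrades the $\liminf$ to a limit using submultiplicativity of $w$ together with the regularity hypothesis), not from the inequality $\liminf\leq\phi$ alone.
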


\section*{Acknowledgements}

The author is grateful to Professor C.J.K.\ Batty for his guidance, to the EPSRC for its financial support, and to the anonymous referee, whose careful reading of an earlier version  led to various minor improvements.

\end{document}